\documentclass[twoside,11pt,reqno]{amsart}
\usepackage[a4paper,margin=1.3in]{geometry}
\usepackage[dvipsnames]{xcolor}
\usepackage{amsthm,amsmath,amsfonts,graphicx,geometry,lipsum,amsxtra}
\usepackage{hyperref}
\usepackage{mathrsfs}
\usepackage{verbatim}
\usepackage{tikz}
\usepackage{epigraph}
\usepackage{tcolorbox}

\numberwithin{equation}{section}
\usepackage{esint}
\usepackage{mathtools}

\usepackage[dvipsnames]{xcolor}
\usepackage{etoolbox}
\hypersetup{
    colorlinks=true,
    linkcolor=blue,
    citecolor=blue,
    urlcolor=red,
    pdftitle={[BMSS25]Shape-Resonance in Spectral density, Scattering Cross-section, Time delay and Bound on Sojourn time},
    }
\usepackage{comment}
\usepackage{amsmath}
\usepackage{tikz}
\usepackage{epigraph}
\usepackage{lipsum}
\allowdisplaybreaks

\definecolor{titlepagecolor}{cmyk}{1,.60,0,.40}
\patchcmd{\subsection}{\normalfont}{\normalfont\color{blue}}{}{}
\DeclareFixedFont{\titlefont}{T1}{ppl}{b}{it}{0.5in}

\def\th@plain{%
  \thm@notefont{}
  \itshape 
}
\def\th@definition{%
  \thm@notefont{}
  \normalfont 
}
\makeatother

\usepackage{srcltx} 
\usepackage{amscd}
\usepackage{amssymb}
\usepackage{amsmath}
\usepackage{pb-diagram}
\usepackage{graphicx}
\usepackage{hyperref}
\usepackage{array}
\usepackage[all]{xy}
\xyoption{matrix}
\xyoption{arrow}
\usepackage{pdfsync}
\usepackage{physics}
\usepackage{enumitem}
 
\usepackage{thmtools}
\usepackage{amssymb}

\DeclareUnicodeCharacter{2212}{-,+}
 \numberwithin{equation}{section}
\newtheorem{theorem}{Theorem}[section]
\newtheorem{proposition}[theorem]{Proposition}
\newtheorem{corollary}[theorem]{Corollary}
\newtheorem{lemma}[theorem]{Lemma}
\newtheorem{remark}[theorem]{Remark}
\newtheorem{definition}[theorem]{Definition}

\newcommand{\bdfn}{\begin{definition}}
\newcommand{\bthm}{\begin{theorem}}
\newcommand{\blem }{\begin{lemma}}
\newcommand{\bcla }{\begin{cla}aim}
\newcommand{\ecla }{\end{cla}im}
\newcommand{\bpro}{\begin{proposition}}
\newcommand{\bcor }{\begin{corollary}}

\newcommand{\brmrk}{\begin{remark}}
\newcommand{\bassu}{\begin{assumption}}
\newcommand{\eassu}{\end{assumption}}
\newcommand{\edfn}{\end{definition}}
\newcommand{\ethm}{\end{theorem}}
\newcommand{\elem }{\end{lemma}}
\newcommand{\epro}{\end{proposition}}
\newcommand{\ecor}{\end{corollary}}
\newcommand{\ermrk}{\end{remark}}

\newcommand*\diff{\mathop{}\!\mathrm{d}}
\newcommand{\e}{\mathrm{e}}

\newcommand{\RR}{\mathbb R}
\renewcommand{\H}{\mathcal H}

\newcommand{\iu}{{i\mkern1mu}}

\renewcommand{\Im}{\operatorname{Im}}
\DeclareMathOperator{\sgn}{sgn}
\newcommand{\slim}{\mathop{\mathrm{s\text{-}lim}}}
\calclayout
\begin{document}
\title[Shape-Resonance in Spectral density, Scattering Cross-section,$\cdots$]{Shape-Resonance in Spectral density, Scattering Cross-section, Time delay and Bound on Sojourn time}
\subjclass[2020]{47A10; 47A55; 81Q15. }
\keywords{resonance; embedded eigenvalue; time delay; sojourn time; rank-one perturbation. }

\author[Bansal]{Hemant Bansal}
\address{Hemant Bansal\\ Department of Mathematical Sciences, Indian Institute of Science Education and Research Mohali\\Sector 81, SAS Nagar, Punjab 140306, India}
\email{ph20030@iisermohali.ac.in}

\author[Maharana]{Alok Maharana}
\address{Alok Maharana\\ Department of Mathematical Sciences, Indian Institute of Science Education and Research Mohali\\Sector 81, SAS Nagar, Punjab 140306, India}
\email{maharana@iisermohali.ac.in}

\author[Sahu]{Lingaraj Sahu}
\address{Lingaraj Sahu\\ Department of Mathematical Sciences, Indian Institute of Science Education and Research Mohali\\Sector 81, SAS Nagar, Punjab 140306, India}
\email{lingaraj@iisermohali.ac.in}

\author[Sinha]{Kalyan B. Sinha}
\address{Kalyan B. Sinha \\
Theoretical Sciences Unit \\
Jawaharlal Nehru Centre for Advanced Scientific Research \\
Jakkur, Bangalore, Karnataka 560064, India}
\email{kbs@jncasr.ac.in}
\vspace*{-1cm}

 \maketitle
\begin{abstract}
The Friedrichs model~\cite{Friedrichs} is revisited to obtain precise results about the asymptotic behaviour (the so-called Breit-Wigner formula~\cite{Breit}) of a resonance near an embedded eigenvalue and the ``spectral concentration" results as a corollary. Some of the abstract results involved can also be used to address similar questions about a rank-one perturbation of the Laplacian. Exact asymptotic properties are also obtained for the sojourn time, the scattering amplitude and time delay.
\end{abstract}
\section{Introduction}
Resonance in quantum theory is a well-studied phenomenon in both time-dependent and time-independent scattering theory. An embedded eigenvalue of a self-adjoint operator may disappear under ``small" perturbations and give rise to resonance near the embedded eigenvalue of the unperturbed operator. This behaviour is often characterized by the local properties of various quantities near the embedded eigenvalue, such as spectral density, time delay, sojourn time and scattering amplitude. More generally, it can also be described in terms of the spectral concentration. Several methods have been developed to study resonances, such as: (a) the formal perturbation series for eigenvalues~\cite{Rejto,Greenlee,KatoBook};~(b)~meromorphic continuation of suitable matrix elements of resolvent from upper to lower half of the complex plane \cite{Titchmarsh,Howland2} or looking at the boundary behaviour of matrix elements of resolvent in upper half-plane~\cite{Howland1}; (c)~theory of dilation analytic  potentials (also known as complex-scaling method)~\cite{Simon1,SimonBook5}; (d) analytic continuation of the scattering matrix~\cite{Newton,KBS4}.
As a simplified approximation, using rank-one perturbation of a self-adjoint operator, the resonance phenomenon has been studied in ~\cite{Friedrichs,Howland1,KBS3,Fernandez1}. Revisiting the Friedrichs model $H_\alpha=M_x+\alpha\langle\cdot,u\rangle u$ on $L^2(\RR),$ we have studied the resonance phenomenon in terms of detailed and finer analysis of the asymptotic behaviour of the spectral density leading to the spectral concentration, in terms of the Cauchy distribution (often called Breit-Wigner formula by physicists~\cite{Breit}). Furthermore, similar detailed analysis can be carried through for the scattering amplitude and time delay by a limiting procedure after suitable translation and scaling of ``energy". We also derive a lower bound for the sojourn time under the perturbed evolutions unlike a possible imprecise limiting result as in~\cite{Fernandez1}. The results from this model in $L^2(\RR)$ are extended to rank-one perturbation of Laplacian in $L^2(\RR^3).$\par
The organization of the paper is as follows. In Section \ref{2}, we present preliminaries results and fix the notations for the subsequent analysis. In Section \ref{3}, we discuss the spectral theory of rank-one perturbations of the multiplication operator $M_x$ in $L^2(\RR)$, establishing necessary and  sufficient conditions for the existence of embedded eigenvalues and we fix the model. Section \ref{4} examines asymptotic behaviour of the spectral density under suitable translation  and scaling. In Section \ref{5}, we derive the estimates for the sojourn time. In Section \ref{6}, we discuss the behaviour of the scattering amplitude as the perturbed scattering system approaches the initial scattering system and obtain asymptotics for time delay. Section \ref{7} deals with the study of the resonance phenomenon for rank-one perturbation of the Laplacian in   $\mathbb R^3.$
\section{Notations and preliminaries}\label{2}
For a self-adjoint operator $T$ (possibly unbounded) on a separable Hilbert space $\H$, let $\sigma(T)$, $\sigma_{ac}(T)$ and $\sigma_{s}(T)$ denote the spectrum, the absolutely continuous spectrum and the singular spectrum of $T$ respectively. Let $\mathcal{H}_{\text{ac}}(T)$ denote the absolutely continuous subspace with respect to $T$. Let $\mathcal B(\H)$, $\mathcal B_1(\H)$ and $\mathcal B_2(\H)$ denote the spaces of bounded, trace class and Hilbert-Schimdt operators on $\H$ respectively. The trace and  Hilbert-Schimdt norms are denoted by $||\cdot||_1$ and $||\cdot||_2$ respectively. A sequence $\{T_n\}\subset\mathcal B(\H)$ is said to converge strongly to an operator $T\in\mathcal B(\H)$, written as $\slim\limits_{n\to\infty}~T_n=T$, if for each $v\in\H$, $T_nv\to Tv$ as $n\to\infty.$\par Let  \( C^\infty(\mathbb{R}) \), \( C^\infty_0(\mathbb{R}) \) and $\mathcal{S}(\mathbb{R})$ denote the spaces of all smooth functions, smooth functions vanishing at $\pm\infty$ and rapidly decreasing smooth functions on $\RR$ respectively. For $1 \leq p\leq \infty$, let $L^p(\mathbb{R})$ denote the standard Lebesgue space.\par  The Fourier transform of a function $f$ is defined as:
\[\widehat{f}(\xi) = \frac{1}{\sqrt{2\pi}} \int_{\mathbb{R}} f(x) e^{-\iu \xi x}\diff x \quad \text{for}\ \xi \in\RR\]
whenever the integral converges, in an appropriate sense.\\
We now present some preliminary results that will be required for computing the resolvent limits of the multiplication operator $M_x$.
\bdfn\label{PVdefn} Let $f \in \mathcal{S}(\mathbb{R})$. The Cauchy principal value of $f$, denoted by $\gamma(f,\cdot)$, is defined as
\begin{equation}\gamma(f,\lambda): = \lim_{\epsilon \to 0^+} \int_{\mathbb{R} \setminus (\lambda - \epsilon, \lambda + \epsilon)} \frac{f(x)}{x - \lambda}\diff x\quad \text{ for }\lambda\in\RR\end{equation} where the above limit exists~$($see Corollary~$4.95$~$($b$)$ in \cite{DorinaMitreaBook}$)$. \edfn
\bpro \label{pro2}Let $f \in \mathcal{S}(\mathbb{R})$. Then the following properties hold: \begin{enumerate}[label=(\alph*)]
    \item The function $\gamma(f,\cdot)$ satisfies\begin{equation}\label{PVFTdefn}\gamma(f,\lambda)=\frac{1}{2\iu}\int_\RR\widehat{f}(\xi)\sgn(\xi)\e^{\iu \xi\lambda}\diff \xi\quad\text{for }\lambda\in\RR\end{equation} where $\sgn(\xi)=1$ for $\xi>0$ and $\sgn(\xi)=-1$ for $\xi<0.$
    \item $\gamma(f,\cdot)\in C^\infty_0(\RR)$ and its $k$-th derivative satisfies \begin{equation}\label{derivativeprincipalvalue}
\frac{\partial^k\gamma(f,\cdot)}{\partial\lambda^k}=\gamma\left(\frac{\diff^kf}{\diff\lambda^k},\cdot\right)
\end{equation}
\item $\lim\limits_{|\lambda|\to\infty}\lambda\,\gamma(f,\lambda)=-\int_\RR f(x)\diff x$
\item For every $\lambda\in\RR$, the conjugation property holds:  \[\overline{\gamma(f,\lambda)}=\gamma(\overline{f},\lambda)\]
\item $\gamma(f,\cdot)\in L^2(\RR)$ and $||\gamma(f,\cdot)||_{L^2(\RR)}=||f||_{L^2(\RR)}$
\end{enumerate}
\epro\begin{proof} For the proof of part~(a), see Corollary~$4.95$~$($c$)$ in \cite{DorinaMitreaBook}. Note that, by part~(a), we have: \begin{align*}\gamma\left(\frac{\diff^kf}{\diff\lambda^k},\lambda\right)&= \frac{1}{2\iu} \int_{\mathbb{R}} \widehat{\frac{\diff^kf}{\diff\lambda^k}}(\xi)\sgn(\xi) \e^{\iu \xi \lambda}\diff \xi\\&=\frac{1}{2\iu} \int_{\mathbb{R}} \widehat{f}(\xi)\sgn(\xi)(\iu\xi)^k \e^{\iu \xi \lambda}\diff \xi\\&=\frac{1}{2\iu} \int_{\mathbb{R}} \widehat{f}(\xi)\sgn(\xi) \frac{\partial^k}{\partial\lambda^k}(\e^{\iu \xi \lambda})\diff \xi\end{align*}
To complete the proof of part~(b), we note that, since $f\in\mathcal S(\RR),$ the derivative and the integral may be interchanged repeatedly by the Lebesgue dominated convergence theorem, which implies that $\gamma(f,\cdot)\in C^\infty(\RR)$. Furthermore, by the Riemann–Lebesgue lemma, we have $\gamma(f,\cdot)\to 0$ as $|\lambda|\to\infty$. Hence, $\gamma(f,\cdot)\in C_0^\infty(\RR).$
By definition of $\gamma$ in \eqref{PVdefn}, we have \[\lambda\, \gamma(f,\lambda)= -\int_\RR f(x)\diff x+\gamma(q,\lambda)\] where $q(x)=xf(x)$ and part~(c) follows as $\gamma(q,\cdot)\in C_0(\RR).$ Part (d) follows from part~(a) and the definition of the Fourier transform. Part~(e) follows from part~(a) and an application of Plancherel's theorem. \end{proof}
\bpro\label{derivative} Let $f\in\mathcal S(\RR)$ such that $f(x_0)=0$. Set \[g(x)=\begin{cases}
\frac{f(x)}{x-x_0} & \text{if } x\neq x_0\\
f'(x_0) & \text{if } x = x_0
\end{cases}\] Then the following properties hold:
\begin{enumerate}[label=(\alph*)]
\item $g\in \mathcal S(\RR)$
\item $\frac{\partial}{\partial\lambda}\gamma(|f|^2,\cdot)_{|_{\lambda=x_0}}=\int_\RR|g(x)|^2\diff x$
    \end{enumerate}
\epro\begin{proof} For $|x|$ large enough, $g(x)$ behaves like $f(x)$, hence $g$ is a function of rapid decrease at $\pm\infty$. Since $f(x_0)=0$ and $f\in C^1(\RR),$ $g\in C^1(\RR).$ By repeating the same argument, we conclude $g\in C^\infty(\RR).$ Hence $g\in S(\mathcal{\RR})$ which proves (a). Now for $x\neq x_0,$ \[(f\overline{g})'(x)=\frac{(|f|^2)'(x)}{x-x_0}-\frac{|f(x)|^2}{(x-x_0)^2}\] Since $f(x_0)=0$, both terms on the right hand side of the above equation are integrable. As $f\overline{g}\in\mathcal S(\RR)$, we obtain \[\int_\RR\frac{(|f|^2)'(x)}{x-x_0}\diff x=\int_\RR\frac{|f(x)|^2}{(x-x_0)^2}\diff x\] This completes the proof of part~(b).
\end{proof}
 \bpro[Plemelj-Privalov Theorem]\label{Plemelj-Privalov Theorem}  Let $f\in\mathcal{S(\RR)}.$ Then \[\lim_{\epsilon\to0^+}\int_\RR\frac{f(x)}{x-(\lambda\pm\iu\epsilon)}\diff x=\gamma(f,\lambda)\pm\iu\pi f(\lambda)\] uniformly for $\lambda$ in compact sets. \epro \noindent For the proof, see equation (4.7.48) in \cite{DorinaMitreaBook}.\\  
 \section{The model}\label{3}
Consider the multiplication operator $M_x$  on the Hilbert space $L^2(\RR)$ with the domain: \[D(M_x)=\left\{f\in L^2(\RR)\Big|\int_{-\infty}^{\infty}|xf(x)|^2\diff x<\infty\right\}\] and \[(M_xf)(x)=xf(x)\quad\text{for }f\in D(M_x)\] Then $M_x$ is a self-adjoint operator with $\H_{ac}(M_x)=L^2(\RR)$. Let $u\in\mathcal S(\RR)$ with ${||u||_{L^2(\RR)}=1}.$ For $\alpha\in\mathbb R,$ consider  the perturbed operator \[H_\alpha:=M_x+\alpha\langle\cdot,u\rangle u\quad\text{ on }L^2(\RR)\] The operator $H_\alpha$ is clearly self-adjoint with the domain $D(H_\alpha) = D(M_x)$.
We denote the resolvent and spectral measure of $H_\alpha$ by $R_\alpha$ and $E_\alpha$ respectively. To analyze the spectrum of $H_\alpha$, we examine the relationship between the matrix elements of the resolvents of $H_\alpha$ and $H_0$ in the following lemma. \blem For any $v,w\in L^2(\RR)$ and $\Im z\neq0$, the following relation holds: \begin{equation}\label{ResolventRelation2}\langle  R_\alpha(z)v,w\rangle =\langle  R_0(z)v,w \rangle-\alpha\dfrac{\langle  R_0(z)v,u \rangle\langle  R_0(z)u,w\rangle}{1+\alpha\langle  R_0(z)u,u \rangle}\end{equation}  \elem \begin{proof} By the resolvent identity, \[R_\alpha(z)-R_0(z)=-\alpha\langle R_0(z)\cdot,u\rangle R_\alpha(z)u\]
and we obtain  \begin{equation}\label{eq1}\langle R_\alpha(z)v,w\rangle=\langle R_0(z)v,w\rangle-\alpha\langle R_0(z)v,u\rangle\langle R_\alpha(z)u,w\rangle\end{equation} which leads to
\begin{equation}\label{ResolventRelation1}\langle R_\alpha(z)u,w\rangle=\dfrac{\langle  R_0(z)u,w\rangle}{1+\alpha\langle  R_0(z)u,u \rangle}\end{equation}
Substituting \eqref{ResolventRelation1} into \eqref{eq1}, we obtain \eqref{ResolventRelation2}.\end{proof}In order to study the spectral properties of the self-adjoint operator \( H_\alpha \), in the next lemma, we analyze the boundary values of the terms appearing in \eqref{ResolventRelation2} as \( \Im z \to 0 \) from upper and lower half of the complex plane.
 \blem\label{exception} Let $v,w\in\mathcal S(\RR)$. Set \[\langle R_0(\lambda\pm\iu0)v,w\rangle:=\lim_{\epsilon\to0^+}\langle  R_0(\lambda\pm\iu\epsilon)v,w\rangle, \text{ if it exists}; \text{ and}\] \[\Gamma_\alpha^\pm=\{\lambda\in\RR~|~1+\alpha \langle R_0(\lambda\pm\iu0)u,u\rangle=0\}\]
Then \begin{enumerate}[label=(\alph*)]\item $\langle R_0(\lambda\pm\iu0)v,w\rangle=\gamma(v\overline{w},\lambda)\pm\iu\pi v(\lambda)\overline{w(\lambda)}$, uniformly for $\lambda$ in compact sets.\item $\Gamma_\alpha^+=\Gamma_\alpha^-$, denoted by $\Gamma_\alpha$, is a closed set of Lebesgue measure $0$ and $\sigma_s(H_\alpha)\subset\Gamma_\alpha$.
\item For $\lambda\in\RR\setminus\Gamma_\alpha,$ the following holds \[ \lim_{\epsilon\to0^+}\langle R_\alpha(\lambda\pm\iu\epsilon)v,w\rangle=\langle  R_0(\lambda\pm\iu0)v,w \rangle-\alpha\dfrac{\langle  R_0(\lambda\pm\iu0)v,u \rangle\langle  R_0(\lambda\pm\iu0)u,w\rangle}{1+\alpha\langle  R_0(\lambda\pm\iu0)u,u \rangle}\] where the above convergence is uniform for $\lambda$ in compact subsets of $\RR\setminus\Gamma_\alpha$.
\end{enumerate} \elem \begin{proof}
Part (a) follows by an application of Proposition \ref{Plemelj-Privalov Theorem}. $\Gamma_\alpha^+=\Gamma_\alpha^-$ follows from part~(a). For the proof of rest of part~(b), see Lemma 9.5 in \cite{KBSBook}. Part (c) follows from the definition of $\Gamma_\alpha$, part~(a) and equation \eqref{ResolventRelation2}.
\end{proof}
In this model, since the perturbation is compact (in fact rank-one), the essential spectrum of $H_\alpha$ is entire real line. Therefore any eigenvalue (if any) of $H_\alpha$ is naturally embedded in the continuous spectrum and we study such a case.
Define \begin{equation}\label{F}F(\alpha,\lambda):=1+\alpha \langle R_0(\lambda+\iu0)u,u\rangle=F_1(\alpha,\lambda)+\iu F_2(\alpha,\lambda)\end{equation} where \begin{equation}\label{F1F2defn}
    F_1(\alpha,\lambda)=1+\alpha\gamma(|u|^2,\lambda)\quad \text{and}\quad  F_2(\alpha,\lambda)=\alpha\pi|u(\lambda)|^2
\end{equation}
\begin{theorem}\label{EmbeddedDisappear}Let $u\in\mathcal S(\RR)$ be fixed. Then
  \begin{enumerate}[label=(\alph*)]
    \item The operator $H_{\alpha_0}$ has an eigenvalue $\lambda_0$ if and only if $u({\lambda_0})=0$ and $1+\alpha_0\gamma(|{u}|^2,\lambda_0)=0$. In this case, $\lambda_0$ is a simple eigenvalue with the eigenvector \begin{equation}\label{phi}\phi(x)=\begin{cases}
\frac{u(x)}{x-\lambda_0} & \text{if } x\neq\lambda_0 \\
u'(\lambda_0) & \text{if } x = \lambda_0
\end{cases}\end{equation}  \item Suppose $u$ vanishes only at $x=\lambda_0$ and $1+\alpha_0\gamma(|{u}|^2,\lambda_0)=0$. Then $\mathcal{H}_{\text{ac}}(H_{\alpha_0}) = \{\phi\}^\perp$ and $\H_{ac}(H_\alpha)=L^2(\RR)$ for $\alpha\neq\alpha_0$.
    \end{enumerate}
\end{theorem}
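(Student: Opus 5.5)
For part (a) I will solve the eigenvalue equation directly. Suppose $H_{\alpha_0}f=\lambda_0 f$ with $0\neq f\in D(M_x)$. Then $(x-\lambda_0)f(x)=-\alpha_0\langle f,u\rangle u(x)$ almost everywhere. If $\alpha_0\langle f,u\rangle=0$, then $f=0$ a.e., which is impossible. So we may normalise $c:=-\alpha_0\langle f,u\rangle\neq 0$, and then $f(x)=c\,u(x)/(x-\lambda_0)$ for $x\neq\lambda_0$.

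Square integrability of $f$ near $\lambda_0$ forces $u(\lambda_0)=0$, since $u$ is continuous and $1/(x-\lambda_0)\notin L^2$ near $\lambda_0$. Hence $f=c\phi$ with $\phi$ as in \eqref{phi}, and $\phi\in\mathcal S(\RR)$ by Proposition~\ref{derivative}(a). Substituting back gives the consistency condition $c=-\alpha_0 c\langle \phi,u\rangle$, that is $1+\alpha_0\int |u(x)|^2/(x-\lambda_0)\,\diff x=0$. Because $|u|^2/(x-\lambda_0)$ is integrable when $u(\lambda_0)=0$, this integral equals $\gamma(|u|^2,\lambda_0)$.

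For the converse, I assume $u(\lambda_0)=0$ and $1+\alpha_0\gamma(|u|^2,\lambda_0)=0$. I will check directly that $\phi\in D(M_x)$ (indeed $x\phi\in\mathcal S$) and that $(M_x-\lambda_0)\phi+\alpha_0\langle\phi,u\rangle u=u+\alpha_0\gamma(|u|^2,\lambda_0)u=0$. Simplicity follows because every eigenvector was shown to be a multiple of $\phi$.

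For part (b), when $\alpha\neq\alpha_0$, Lemma~\ref{exception}(b) gives $\sigma_s(H_\alpha)\subset\Gamma_\alpha$. So it suffices to show $\Gamma_\alpha=\emptyset$, or at least that it carries no singular spectrum. A point $\lambda\in\Gamma_\alpha$ requires $F_2(\alpha,\lambda)=\alpha\pi|u(\lambda)|^2=0$ and $F_1(\alpha,\lambda)=0$.
\begin{itemize}
\item If $\alpha=0$, then $F=1$, so $\Gamma_0=\emptyset$.
\item If $\alpha\neq0$, then $u(\lambda)=0$, so $\lambda=\lambda_0$. Then $1+\alpha\gamma(|u|^2,\lambda_0)=0$ together with $1+\alpha_0\gamma(|u|^2,\lambda_0)=0$ forces $\alpha=\alpha_0$, a contradiction (note $\gamma(|u|^2,\lambda_0)\neq0$ since it equals $-1/\alpha_0$).
\end{itemize}
Thus $\Gamma_\alpha=\emptyset$ and $\sigma_s(H_\alpha)=\emptyset$. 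Since $H_\alpha$ has no point spectrum, this gives $\H_{ac}(H_\alpha)=L^2(\RR)$.

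For $\alpha=\alpha_0$, the same argument gives $\Gamma_{\alpha_0}=\{\lambda_0\}$. A singular spectral measure supported on a single point is pure point, so the singular subspace is the eigenspace at $\lambda_0$, which by part (a) is $\mathrm{span}\{\phi\}$. Hence $\H_{ac}(H_{\alpha_0})=\{\phi\}^\perp$.

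The main obstacle is justifying the inclusion $\sigma_s\subset\Gamma_\alpha$ in a form strong enough to conclude that the singular part of the spectral measure lives on $\Gamma_\alpha$, rather than merely that the spectral support lies in $\Gamma_\alpha$. This is exactly Lemma~\ref{exception}(b), which I invoke rather than reprove. If needed, I can supplement it by noting that, by Lemma~\ref{exception}(c), the boundary values $\langle R_\alpha(\lambda+\iu0)v,v\rangle$ are locally bounded off $\Gamma_\alpha$ for $v$ in the dense set $\mathcal S(\RR)$. This makes the spectral measures $\langle E_\alpha(\cdot)v,v\rangle$ absolutely continuous on $\RR\setminus\Gamma_\alpha$.
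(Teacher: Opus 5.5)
Your proof is correct, and part (b) follows the paper's route. You show $\Gamma_{\alpha_0}=\{\lambda_0\}$ and $\Gamma_\alpha=\emptyset$ for $\alpha\neq\alpha_0$, then confine the singular spectrum to $\Gamma_\alpha$. The paper handles the $\alpha_0$ case via Teschl's Corollary~3.28 on boundary values over a dense set, which is your supplementary remark. Your explicit step from $\sigma_s(H_{\alpha_0})\subset\{\lambda_0\}$ to $\mathcal{H}_s(H_{\alpha_0})=\ker(H_{\alpha_0}-\lambda_0)=\mathrm{span}\{\phi\}$ fills in what the paper leaves implicit. Your separate case $\alpha=0$ also repairs a small slip in the paper: its assertion $F_2(\alpha,\lambda)\neq 0$ for $\lambda\neq\lambda_0$ fails at $\alpha=0$, harmlessly, since $F(0,\cdot)\equiv 1$.

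The real difference is in the necessity half of (a). The paper obtains $u(\lambda_0)=0$ and $1+\alpha_0\gamma(|u|^2,\lambda_0)=0$ at once from $\lambda_0\in\sigma_s(H_{\alpha_0})\subset\Gamma_{\alpha_0}$, i.e.\ from Lemma~\ref{exception}(b), which is imported from the literature. It solves the eigenvalue equation only afterwards, to get simplicity. You solve $(x-\lambda_0)f=-\alpha_0\langle f,u\rangle u$ first. Square-integrability forces $u(\lambda_0)=0$, and the self-consistency relation $c=-\alpha_0 c\langle\phi,u\rangle$ gives the second condition. This uses the fact that the principal value is an honest Lebesgue integral here, since $|u|^2/(x-\lambda_0)=\phi\overline{u}$ is integrable. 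Your route is elementary, uses no spectral theory, and delivers necessity and simplicity from one computation. The paper's version is shorter only because the lemma is taken as given.

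Finally, the ``obstacle'' you flag is not one. $\sigma_s(H_\alpha)$ is by definition the spectrum of $H_\alpha$ restricted to $\mathcal{H}_s(H_\alpha)$, i.e.\ the closed support of the singular parts of all spectral measures. So the inclusion $\sigma_s(H_\alpha)\subset\Gamma_\alpha$ is already the measure-level statement you need.
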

\begin{proof}
 By Lemma \ref{exception}~(b), it follows that if $\lambda_0$ is an eigenvalue of $H_{\alpha_0}$, then $\lambda_0\in\Gamma_{\alpha_0}$ which implies $u(\lambda_0)=1+\alpha_0\gamma(|u|^2,\lambda_0)=0.$ Conversely, since $u \in \mathcal{S}(\mathbb{R})$ and $u(\lambda_0) = 0$, by Proposition \ref{derivative}(a), the function $\phi$ defined in \eqref{phi} is in $\mathcal S(\RR)$ and hence $\phi\in D(H_{\alpha_0}).$ Given the condition $1+\alpha_0\gamma(|u|^2,\lambda_0)=0,$ it follows that \[(H_{\alpha_0}-\lambda_0)\phi=\left(1+\alpha_0\gamma(|u|^2,\lambda_0)\right)u=0\] Also, if $\psi$ is any other eigenvector of $H_{\alpha_0}$ corresponding to $\lambda_0$, then \[(M_x-\lambda_0)\psi+\alpha_0\langle\psi,u\rangle u=0\]which implies \[ \psi(x)=-\alpha_0\langle\psi,u\rangle \frac{u(x)}{x-\lambda_0}\quad\text{ for a.e. }x\in\RR\] Hence $\lambda_0$ is a simple eigenvalue of $H_{\alpha_0}$. \par To prove part~(b), first observe that since $F_2(\alpha_0,\lambda)=\alpha_0\pi|u(\lambda)|^2\neq0$ for $\lambda\neq\lambda_0$, thus $\Gamma_{\alpha_0}=\{\lambda_0\}.$ Hence $\lim\limits_{\epsilon\to0^+}\langle R_{\alpha_0}(\lambda+\iu\epsilon)v,v\rangle$ exists for any $v\in \mathcal{S}(\RR)$ and $\lambda\neq\lambda_0$. Since $\mathcal S(\RR)$ is dense in $L^2(\RR)$, by Corollary 3.28 in \cite{Teschl}, it follows that $\H_{ac}(H_{\alpha_0})=\{\phi\}^\perp$. Now for $\alpha\neq\alpha_0$, \begin{align*}F_2(\alpha,\lambda)\neq0\quad\text{for }\lambda\neq\lambda_0\quad\text{ and }\quad F_1(\alpha,\lambda_0)=1+\alpha\gamma({|u|^2},\lambda_0)=1-\frac{\alpha}{\alpha_0}\neq0\end{align*} Hence $\Gamma_\alpha=\emptyset$ for all $\alpha\neq\alpha_0$ and we get $\H_{ac}(H_\alpha)=L^2(\RR).$ This completes the proof of part~(b).
\end{proof}\noindent We now define the model that forms the basis for the subsequent analysis.
\begin{definition}[Model in $\RR$]\label{assumptions}Let $\alpha_0,\lambda_0\in\RR$ be fixed. For $\alpha\in\RR,$ consider \[H_\alpha=M_x+\alpha\langle\cdot,u\rangle u\quad\text{on }L^2(\RR)\] where $u$ satisfies: $u\in\mathcal S(\RR)$ with $||u||_{L^2(\RR)}=1,$ $u$ vanishes only at $x=\lambda_0$ and $1+\alpha_0\gamma(|u|^2,\lambda_0)=0.$
 \end{definition}
\noindent In the model defined above, by Theorem \ref{EmbeddedDisappear}, $\lambda_0$ is a simple eigenvalue of $H_{\alpha_0}$ embedded in the absolutely continuous spectrum which disappears after perturbation. All the results that follow will be with respect to this model.
\section{Spectral density and its asymptotic properties}\label{4}
In our model, since \( H_\alpha \) is purely absolutely continuous for \( \alpha \neq \alpha_0 \), the spectral density associated with the measure $\langle E_\alpha(\diff\lambda)v,w\rangle$ for $v,w\in L^2(\RR)$ is given by \begin{equation}\label{spectraldensity}\begin{split}
    \rho^{v,w}(\alpha,\lambda)&=\frac{\diff}{\diff\lambda}\langle E_\alpha(\diff\lambda)v,w\rangle\\&=\lim_{\epsilon\to0^+}\frac{1}{2\pi\iu}\left(\langle R_\alpha(\lambda+\iu\epsilon)v,w\rangle-\langle R_\alpha(\lambda-\iu\epsilon)v,w\rangle\right)\end{split}\end{equation} for almost every $\lambda\in\RR.$
    Here we compute the density $\rho^{v,w}(\alpha,\cdot)$ explicitly for our model and study its asymptotic behaviour in terms of the Cauchy distribution near the embedded eigenvalue $\lambda_0$ as $\alpha\to\alpha_0$ and then we obtain the spectral concentration and time decay.\blem\label{SpecDe}For the operators $H_\alpha$, $\alpha\in\RR,$ we have\begin{enumerate}[label=(\alph*)]\item For $v,w\in \mathcal S(\RR)$ and $\alpha\neq\alpha_0$, the spectral density $\rho^{v,w}(\alpha,\cdot)$ is given by:\begin{equation}\label{DensityForvw} \begin{split} \rho^{v,w}(\alpha,\lambda)=&v(\lambda)\,\overline{w(\lambda)}-\alpha\frac{F_1(\alpha,\lambda)}{|F(\alpha,\lambda)|^2}\left(\gamma({v\overline{u}},\lambda)\,u(\lambda)\,\overline{w(\lambda)}+\gamma({u\overline{w}},\lambda) \,v(\lambda)\,\overline{u(\lambda)}\right)\\& +\alpha\frac{F_2(\alpha,\lambda)}{|F(\alpha,\lambda)|^2}\left(-\pi v(\lambda)\,\overline{w(\lambda)}\,|u(\lambda)|^2+\frac{1}{\pi}\,\gamma({v\overline{u}},\lambda)\,\gamma({u\overline{w}},\lambda)\right)\end{split}\end{equation} for $\lambda\in\RR.$
    \item  There exists a $C^1$ function  $\lambda(\alpha)$ defined on an open interval $J$ around  $\alpha_0$ such that $F_1(\alpha,\lambda(\alpha))=0$ for $\alpha\in J$ and $|\lambda(\alpha)-\lambda_0|=O(|\alpha-\alpha_0|)$ as $\alpha\to\alpha_0$. 
\end{enumerate} \elem\begin{proof}Part (a) follows from \eqref{spectraldensity} and Lemma \ref{exception}. Note that  the function $F_1(\alpha,\lambda)=1+\alpha\gamma({|u|^2},\lambda)$ is a smooth mapping from $\RR^2$ to\ $\RR$ and $F(\alpha_0,\lambda_0)=0.$
Also using Lemma \ref{derivative} (b), we get \begin{equation}\label{gammaderivative}\frac{\partial F_1}{\partial\lambda}(\alpha_0,\lambda_0)=\alpha_0\gamma((|u|^2)',\lambda_0)=\alpha_0||\phi||^2\neq0\end{equation}
Thus by the implicit function theorem, there exists an open interval $J$ around $\alpha_0$ and a $C^1$ function $\lambda:J\to\RR$ such that $\lambda(\alpha_0)=\lambda_0$ and $F_1(\alpha,\lambda(\alpha))=0$. Also \[\lambda'(\alpha_0)=-\dfrac{\frac{\partial F_1}{\partial\alpha}(\alpha_0,\lambda_0)}{\frac{\partial F_1}{\partial\lambda}(\alpha_0,\lambda_0)}=\frac{1}{\alpha_0^2||\phi||^2}\]
 Since $\lambda'(\alpha_0)\neq 0$, $|\lambda(\alpha)-\lambda_0|=O(|\alpha-\alpha_0|)$ as $\alpha\to\alpha_0.$ This completes the proof of part~(b). \end{proof}  Now define the function $\kappa:J\to\RR$ by
\begin{equation}\label{kappa(alpha)}\displaystyle\kappa(\alpha):=\frac{\pi|u(\lambda(\alpha))|^2}{||\phi||^2}\end{equation} Then  $\kappa(\alpha_0)=0$ and for $\alpha\in J\setminus\{\alpha_0\}$, \(\kappa(\alpha) > 0\). As $u(\lambda_0)=0$ and \[|u(\lambda(\alpha))-u(\lambda_0)|^2=\left|\int_{\lambda_0}^{\lambda(\alpha)}u'(x)\diff x\right|^2\leq ||u'||_\infty^2|\lambda(\alpha)-\lambda_0|^2\] there exists $C'>0$ such that $
\kappa(\alpha)\leq C'|\alpha-\alpha_0|^2 $ for $\alpha$ near $\alpha_0$. If $u$ vanishes to order $n$ at $\lambda_0$, then repeating the above argument, there exists a constant $C>0$ such that 

\begin{equation}\label{kappaestimateR^1}\kappa(\alpha)\leq C|\alpha-\alpha_0|^{2n}\quad\text{ for }\alpha\text{ near }\alpha_0\end{equation}  For simplicity of notation, we set \begin{equation}
    \lambda_h(\alpha)=\lambda(\alpha)+h\kappa(\alpha)\quad\text{for}\ \alpha\in J,~h\in\RR
\end{equation}
\blem \label{lemma1}For any fixed $h\in\RR$, we have \begin{enumerate}[label=(\alph*)]
 \item\begin{equation}\label{F1F2}\lim_{\alpha\to\alpha_0}\frac{F_1(\alpha,\lambda_h(\alpha))}{\kappa(\alpha)}=h\alpha_0||\phi||^2\quad \text{and}\quad  \lim_{\alpha\to\alpha_0}\frac{F_2(\alpha,\lambda_h(\alpha))}{\kappa(\alpha)}=\alpha_0||\phi||^2\end{equation} \item \begin{equation}\label{F1limit}\lim_{\alpha\to\alpha_0}\frac{\kappa(\alpha)F_1(\alpha,\lambda_h(\alpha))}{|F(\alpha,\lambda_h(\alpha))|^2}=\frac{1}{\alpha_0||\phi||^2}\frac{h}{h^2+1}\end{equation}\item\begin{equation}\label{F2limit}\lim_{\alpha\to\alpha_0}\frac{\kappa(\alpha)F_2(\alpha,\lambda_h(\alpha))}{|F(\alpha,\lambda_h(\alpha))|^2}=\frac{1}{\alpha_0||\phi||^2}\frac{1}{h^2+1}\end{equation}\end{enumerate}\elem
 \begin{proof}
      To prove part~(a), we use the following fact: for $q\in C^1(\RR)$,
\begin{equation}\label{limit1}\lim_{\alpha\to\alpha_0}\frac{q(\lambda_h(\alpha))-q(\lambda(\alpha))}{\kappa(\alpha)}=hq'(\lambda_0)\end{equation}
       Since $F_1(\alpha,\lambda(\alpha))=0$, we have \[\frac{F_1(\alpha,\lambda_h(\alpha))}{\kappa(\alpha)}=\frac{F_1(\alpha,\lambda_h(\alpha))-F_1(\alpha,\lambda(\alpha))}{\kappa(\alpha)}=\alpha\frac{\gamma({|u|^2},\lambda_h(\alpha))-\gamma({|u|^2},\lambda(\alpha))}{\kappa(\alpha)}\] Now applying \eqref{limit1} to the function $\gamma(|u|^2,\cdot)$ and using \eqref{gammaderivative}, we obtain
  \[\lim_{\alpha\to\alpha_0}\frac{F_1(\alpha,\lambda_h(\alpha))}{\kappa(\alpha)}=h\alpha_0\gamma((|u|^2)',\lambda_0)=h\alpha_0||\phi||^2\] Noting that by \eqref{limit1}, \[\lim_{\alpha\to\alpha_0}\frac{|u(\lambda_h(\alpha))|^2-|u(\lambda(\alpha))|^2}{\kappa(\alpha)}=h(|u|^2)'(\lambda_0)=0\] which implies
       \[\lim_{\alpha\to\alpha_0}\frac{|u(\lambda_h(\alpha))|^2}{\kappa(\alpha)}=\frac{||\phi||^2}{\pi}\] Thus, by the definition of $F_2$ in \eqref{F1F2defn}, we obtain
\[\lim_{\alpha\to\alpha_0}\frac{F_2(\alpha,\lambda_h(\alpha))}{\kappa(\alpha)}=\alpha_0||\phi||^2\]This complete the proof of (a). For part~(b), we write \[\frac{\kappa(\alpha)F_1(\alpha,\lambda_h(\alpha))}{|F(\alpha,\lambda_h(\alpha))|^2}=\dfrac{\dfrac{F_1(\alpha,\lambda_h(\alpha))}{\kappa(\alpha)}}{\left(\dfrac{F_1(\alpha,\lambda_h(\alpha))}{\kappa(\alpha)}\right)^2+\left(\dfrac{F_2(\alpha,\lambda_h(\alpha))}{\kappa(\alpha)}\right)^2}\] and using part~(a), part~(b) follows. The proof of part~(c) follows similarly.\end{proof}The next theorem gives a precise result on the behaviour of density as $\alpha\to\alpha_0$ in the context of the simple model. The appearance of the Cauchy distribution should be compared with Theorem 3 in \cite{Howland1}.
 \begin{theorem}\label{thm4}  Let $v,w\in\mathcal S(\RR).$ Then for any fixed $h\in \mathbb R,$ we have \begin{equation}\label{vw asym}\lim_{\alpha\to \alpha_0}\kappa(\alpha)\rho^{v,w}(\alpha,\lambda_h(\alpha))=\frac{1}{\pi}\frac{1}{h^2+1}\langle P_{\lambda_0}v,w\rangle\end{equation}where $P_{\lambda_0}$ denotes the eigen-projection corresponding to eigenvalue $\lambda_0$ of $H_{\alpha_0}.$ In particular, \begin{equation}\label{evasym}\lim_{\alpha\to \alpha_0}\kappa(\alpha)\rho^{\phi}(\alpha,\lambda_h(\alpha))=\frac{||\phi||^2}{\pi}\frac{1}{h^2+1}\end{equation} \end{theorem}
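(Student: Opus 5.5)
Multiply the explicit density formula \eqref{DensityForvw} of Lemma \ref{SpecDe}(a) by $\kappa(\alpha)$, evaluate it at $\lambda=\lambda_h(\alpha)$, and pass to the limit term by term using Lemma \ref{lemma1}. There are three groups of terms. The first is $\kappa(\alpha)v(\lambda_h(\alpha))\overline{w(\lambda_h(\alpha))}$. The second is $-\alpha\frac{\kappa F_1}{|F|^2}\bigl(\gamma(v\overline u,\cdot)u\overline w+\gamma(u\overline w,\cdot)v\overline u\bigr)$. The third is $\alpha\frac{\kappa F_2}{|F|^2}\bigl(-\pi v\overline w|u|^2+\frac1\pi\gamma(v\overline u,\cdot)\gamma(u\overline w,\cdot)\bigr)$, with everything evaluated at $\lambda_h(\alpha)$.

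First, by Lemma \ref{SpecDe}(b), $\lambda(\alpha)\to\lambda_0$, and since $\kappa(\alpha)\to0$, also $\lambda_h(\alpha)\to\lambda_0$. All of $v,w,u$ and $\gamma(v\overline u,\cdot),\gamma(u\overline w,\cdot)$ are continuous (Proposition \ref{pro2}(b)), so they converge to their values at $\lambda_0$; in particular $u(\lambda_h(\alpha))\to u(\lambda_0)=0$. The first group is $\kappa$ times a bounded quantity, hence tends to $0$. In the second group the coefficient $\kappa F_1/|F|^2$ converges by \eqref{F1limit} to a finite limit, while the bracket contains the factor $u$ or $\overline u$ evaluated at $\lambda_h(\alpha)$, which tends to $0$; so this group vanishes in the limit. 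In the third group the first summand contains $|u|^2\to0$ multiplied by a convergent coefficient, so it also vanishes.

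The only surviving contribution is
\[
\alpha\,\frac{\kappa F_2}{|F|^2}\,\frac1\pi\,\gamma(v\overline u,\lambda_h)\gamma(u\overline w,\lambda_h)\;\longrightarrow\;\alpha_0\cdot\frac{1}{\alpha_0\|\phi\|^2}\frac{1}{h^2+1}\cdot\frac1\pi\,\gamma(v\overline u,\lambda_0)\gamma(u\overline w,\lambda_0),
\]
by \eqref{F2limit}. It remains to identify $\gamma(v\overline u,\lambda_0)\gamma(u\overline w,\lambda_0)/\|\phi\|^2$ with $\langle P_{\lambda_0}v,w\rangle=\langle v,\phi\rangle\langle\phi,w\rangle/\|\phi\|^2$, using that $\phi$ spans the eigenspace by Theorem \ref{EmbeddedDisappear}(a).

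This identification is the only genuinely new step. Since $u(\lambda_0)=0$, the integrand $v(x)\overline{u(x)}/(x-\lambda_0)=v(x)\overline{\phi(x)}$ is smooth and integrable, so the principal value in Definition \ref{PVdefn} is an ordinary integral: $\gamma(v\overline u,\lambda_0)=\int v\overline\phi=\langle v,\phi\rangle$. Likewise $\gamma(u\overline w,\lambda_0)=\int \phi\overline w=\langle\phi,w\rangle$. Here $\phi$ is real-valued up to the conjugation conventions coming from $u$, so I will track conjugates carefully using $\phi=u/(x-\lambda_0)$ with a real denominator. This gives \eqref{vw asym}. Taking $v=w=\phi$ gives $\langle P_{\lambda_0}\phi,\phi\rangle=\|\phi\|^2$, which is \eqref{evasym}.

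The main care point is confirming that the vanishing factors ($u$ evaluated at $\lambda_h(\alpha)$) really do kill the terms whose coefficients only stay bounded. This holds because \eqref{F1limit} and \eqref{F2limit} give finite limits for $\kappa F_i/|F|^2$, so each such term is a bounded coefficient times a factor tending to $0$.
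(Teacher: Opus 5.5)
Your proposal is correct and follows the paper's proof essentially step for step. You take term-by-term limits in \eqref{DensityForvw} at $\lambda=\lambda_h(\alpha)$ using Lemma \ref{lemma1}, kill every term carrying $\kappa(\alpha)$ alone or a factor of $u(\lambda_h(\alpha))$, keep only the $\gamma(v\overline{u},\cdot)\gamma(u\overline{w},\cdot)$ term, and identify $\gamma(v\overline{u},\lambda_0)=\langle v,\phi\rangle$ and $\gamma(u\overline{w},\lambda_0)=\langle\phi,w\rangle$. One small correction: the aside that $\phi$ is ``real-valued'' is neither true in general nor needed, since only the real denominator $x-\lambda_0$ matters, and that is exactly what you use.
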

    \begin{proof}
   For fixed $h\in\RR,$ substituting $\lambda=\lambda_h(\alpha)$ in \eqref{DensityForvw} and multiplying it by $\kappa(\alpha)$, we get \begin{equation}\label{A1234}\begin{aligned} \kappa(\alpha)\rho^{v,w}(\alpha,\lambda_h(\alpha))=&\kappa(\alpha)\,v\left(\lambda_h(\alpha)\right)\,\overline{w(\lambda_h(\alpha))}\\[5pt]&\quad-\alpha\frac{\kappa(\alpha)\,F_1(\alpha,\lambda_h(\alpha))}{|F(\alpha,\lambda_h(\alpha))|^2}\left(\gamma({v\overline{u}},\lambda_h(\alpha))\, u(\lambda_h(\alpha))\,\overline{w(\lambda_h(\alpha))}\right)\\[5pt]&\quad+\alpha\frac{\kappa(\alpha)\,F_1(\alpha,\lambda_h(\alpha))}{|F(\alpha,\lambda_h(\alpha))|^2}\left(\gamma({u\overline{w}},\lambda_h(\alpha)) \,v(\lambda_h(\alpha))\,\overline{u(\lambda_h(\alpha))}\right)\\[5pt]&\quad-\alpha\pi\frac{\kappa(\alpha)\,F_2(\alpha,\lambda_h(\alpha))}{|F(\alpha,\lambda_h(\alpha))|^2}\left( v(\lambda_h(\alpha))\,\overline{w(\lambda_h(\alpha))}\,|u(\lambda_h(\alpha))|^2\right)\\[5pt]&\quad+\frac{\alpha}{\pi}\frac{\kappa(\alpha)\,F_2(\alpha,\lambda_h(\alpha))}{|F(\alpha,\lambda_h(\alpha))|^2}\left(\gamma({v\overline{u}},\lambda_h(\alpha))\,\gamma({u\overline{w}},\lambda_h(\alpha))\right)\end{aligned}\end{equation}
    To obtain \eqref{vw asym}, we show that, the last term give the desired limit and other terms goes to $0$ as $\alpha\to\alpha_0.$
    Since $\lambda_h(\alpha)\to\lambda_0,\ \kappa(\alpha)\to0$ as $\alpha\to\alpha_0,$ we observe that
\[\lim_{\alpha\to\alpha_0}\kappa(\alpha)v(\lambda_h(\alpha))\overline{w(\lambda_h(\alpha))}=0\cdot v(\lambda_0)\cdot\overline{w(\lambda_0)}=0\] Since $\lim\limits_{\alpha\to\alpha_0}u(\lambda_h(\alpha))=0$, by using \eqref{F1limit} and \eqref{F2limit}, the second, third and fourth terms in \eqref{A1234} go to $0$ as $\alpha\to\alpha_0$. Also,\[\lim_{\alpha\to\alpha_0}\gamma({v\overline{u}},\lambda_h(\alpha))=\gamma({v\overline{u}},\lambda_0)=\langle v,\phi\rangle\hspace{2mm}\text{and}\hspace{2mm}\lim_{\alpha\to\alpha_0}\gamma({u\overline{w}},\lambda_h(\alpha))=\gamma({u\overline{w}},\lambda_0)=\langle \phi ,w\rangle\]
Hence again using \eqref{F2limit}, we get \begin{align*} &\lim_{\alpha\to\alpha_0}\frac{\alpha}{\pi}\frac{\kappa(\alpha)F_2(\alpha,\lambda_h(\alpha))}{|F(\alpha,\lambda_h(\alpha))|^2}\left(\gamma({v\overline{u}},\lambda_h(\alpha))\gamma({u\overline{w}},\lambda_h(\alpha))\right)\\
&=\frac{\alpha_0}{\pi}\left(\frac{1}{\alpha_0||\phi||^2}\frac{1}{h^2+1}\right)\left(\langle v,\phi\rangle\langle \phi,w\rangle\right)\\&=\frac{1}{\pi}\frac{1}{h^2+1}\langle P_{\lambda_0}v,w\rangle\end{align*}
\end{proof}
 Next we consider the phenomenon of spectral concentration of $H_\alpha$ near $\lambda_0$ as $\alpha\to\alpha_0.$ Let us recall the definition of spectral concentration from \cite{KatoBook}.
 \bdfn[Spectral Concentration] Let $\H$ be a Hilbert space and $\{T_\alpha\}_{\alpha\in\RR}$ be a family of self-adjoint operators in $\H$ with the associated spectral measures $\left\{E_\alpha\right\}_{\alpha\in\RR}$. Let $\lambda_0$ be an eigenvalue of $T_{\alpha_0}.$ We say that the spectrum of $T_\alpha$ is concentrated near $\lambda_0$ as $\alpha\to\alpha_0$ if there exist $\{J_\alpha\}$, a family of Borel subsets of $\mathbb R$ such that the Lebesgue measure of $J_\alpha$, $|J_\alpha|\to 0$ and $E_\alpha(J_\alpha)\xrightarrow{} E_{\alpha_0}(\{\lambda_0\})$ strongly as $\alpha\to\alpha_0$. If in addition, $\lim\limits_{\alpha\to\alpha_0}\frac{| J_\alpha|}{|\alpha-\alpha_0|^p}=0$, then we say that the spectrum of $\{T_\alpha\}$ is concentrated to order $p$ near $\lambda_0.$ \edfn The following proposition is useful in deducing the spectral concentration which can be proved by simply following the proof of Theorem 1.15, Chapter VIII in \cite{KatoBook}.
 \bpro\label{pro} Let $T_\alpha$ converges to $T_{\alpha_0}$ in the strong resolvent sense as $\alpha\to\alpha_0$. Let $b(\alpha)$ be a real-valued function continuous at $\alpha_0$ and $P=E_{\alpha_0}(\{b(\alpha_0)\}).$ Then
 \[\slim_{\alpha\to\alpha_0}E_\alpha(-\infty,b(\alpha)](1-P)= E(-\infty,b(\alpha_0))\]
\epro
 Theorem \ref{thm4} and Proposition \ref{pro} together implies the spectral concentration of $H_{\alpha}$ near the embedded eigenvalue $\lambda_0$ of $H_{\alpha_0}$~(cf. Theorem 4 in \cite{Howland1}).
\bcor\label{SpecCor} Suppose $u$ vanishes to order $n$ at $x=\lambda_0.$ Then for any $p\in[0,2n),$ the spectrum of $H_\alpha$ is concentrated near $\lambda_0$ to order $p$ as $\alpha\to\alpha_0$.\ecor
\begin{proof} Let $p\in[0,2n).$ Now choose $r$ such that $p<r<2n$ and let\\ $J_\alpha=\left(\lambda(\alpha)-(\kappa(\alpha))^{\frac{r}{2n}},\lambda(\alpha)+(\kappa(\alpha))^{\frac{r}{2n}}\right].$ 
By the resolvent identity, for $\Im z\neq0,$ \[\lim_{\alpha\to\alpha_0}||R_\alpha(z)-R_{\alpha_0}(z)||_1=0\] in particular $R_\alpha(z)\to R_{\alpha_0}(z)$ in the strong resolvent sense. By Proposition \ref{pro}, it follows that   \begin{equation}\label{eq2}\slim_{\alpha\to\alpha_0}E_\alpha(J_\alpha)(I-P_{\lambda_0})=0\end{equation}
On the other hand,
    \begin{align*}\langle E_\alpha(J_\alpha)\phi,\phi\rangle&=\int_{J_\alpha}\rho^\phi(\alpha,\lambda)\diff\lambda=\int_{-\frac{|J_\alpha|}{2\kappa(\alpha)}}^{\frac{|J_\alpha|}{2\kappa(\alpha)}}\kappa(\alpha)\,\rho^\phi(\alpha,\lambda_h(\alpha))\diff h\end{align*} Set
\[f_\alpha(h):=\chi_{\left(-\frac{|J_\alpha|}{2\kappa(\alpha)},\frac{|J_\alpha|}{2\kappa(\alpha)}\right)}(h)\,\kappa(\alpha)\,\rho^\phi(\alpha,\lambda_h(\alpha)),\ f(h):=\frac{||\phi||^2}{\pi}\frac{1}{h^2+1}\] where $\chi_A$ denote the indicator function of set $A$.
  Since
    \[\lim_{\alpha\to\alpha_0}\frac{|J_\alpha|}{2\kappa(\alpha)}=\lim_{\alpha\to\alpha_0}\frac{1}{\kappa(\alpha)^{1-\frac{r}{2k}}}=\infty\]  then by \eqref{evasym} it follows that $f_\alpha\to f$ pointwise as $\alpha\to\alpha_0.$ Furthermore, \[\int_\RR f_\alpha(h)\diff h\leq ||\phi||^2=\int_\RR f(h)\diff h\]
    Then by Lemma \ref{Halmos}, it follows that \[\lim_{\alpha\to\alpha_0}\int_\RR f_\alpha(h)\diff h=\int_\RR f(h)\diff h\]
    which implies \begin{equation}\label{ev}\lim_{\alpha\to\alpha_0}\langle E_\alpha(J_\alpha)\phi,\phi\rangle=||\phi||^2=\langle P_{\lambda_0}\phi,\phi\rangle\end{equation}
   Now for any $v,w\in L^2(\RR)$, we have \begin{align*}\langle E_\alpha(J_\alpha)P_{\lambda_0}v,w\rangle&=\langle E_\alpha(J_\alpha)P_{\lambda_0}v,P_{\lambda_0}w\rangle+\langle E_\alpha(J_\alpha)P_{\lambda_0}v,(I-P_{\lambda_0})w\rangle\\&=\frac{\langle P_{\lambda_0}v,w\rangle}{||\phi||^2}\langle E_\alpha(J_\alpha)\phi,\phi\rangle+\langle P_{\lambda_0}v,E_\alpha(J_\alpha)(I-P_{\lambda_0})w\rangle\end{align*}
   By \eqref{eq2} and \eqref{ev}, it follows that $\langle E_\alpha(J_\alpha)P_{\lambda_0}v,w\rangle\to\langle P_{\lambda_0}v,w\rangle$ as $\alpha\to\alpha_0.$ Since limiting operator is a projection, weak convergence implies strong convergence and we get $E_\alpha(J_\alpha)P_{\lambda_0}\to P_{\lambda_0}$ strongly as $\alpha\to\alpha_0.$ Hence by \eqref{eq2}, we conclude that  \[\slim_{\alpha\to\alpha_0}E_\alpha(J_\alpha)=P_{\lambda_0}\]
 Also by \eqref{kappaestimateR^1}, it follows that \[\frac{|J_\alpha|}{|\alpha-\alpha_0|^p}=\frac{2\kappa(\alpha)^{\frac{r}{2n}}}{|\alpha-\alpha_0|^p}\leq \frac{2C^{\frac{r}{2n}}|\alpha-\alpha_0|^r}{|\alpha-\alpha_0|^p}\]which implies $\lim\limits_{\alpha\to\alpha_0}\dfrac{|J_\alpha|}{|\alpha-\alpha_0|^p}=0.$ Hence, we conclude that the spectrum of $H_\alpha$ is concentrated near $\lambda_0$ to order $p$ as $\alpha\to\alpha_0.$
\end{proof}

Now, we want to study a finer structure of the evolution generated by $H_\alpha$ by shifting the mean and scaling the time near $\alpha=\alpha_0.$ The following result is an immediate consequence of \eqref{evasym}. See \cite{Davies} for various implications related to the behaviour of spectral density  and spectral concentration.
\begin{corollary} \label{exp} For the eigenvector $\phi$ of $H_{\alpha_0}$, \[\lim_{\alpha\to\alpha_0}\langle\e^{-\iu t\frac{H_\alpha-\lambda(\alpha)}{\kappa(\alpha)}}\phi,\phi\rangle=||\phi||^2\e^{-|t|}\] uniformly in $t.$\end{corollary}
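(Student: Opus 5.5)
We write the matrix element through the spectral density and change variables. Since $H_\alpha$ is purely absolutely continuous for $\alpha\neq\alpha_0$, the functional calculus gives
\[\langle \e^{-\iu t\frac{H_\alpha-\lambda(\alpha)}{\kappa(\alpha)}}\phi,\phi\rangle=\int_\RR \e^{-\iu t\frac{\lambda-\lambda(\alpha)}{\kappa(\alpha)}}\rho^\phi(\alpha,\lambda)\diff\lambda .\]
We substitute $\lambda=\lambda_h(\alpha)=\lambda(\alpha)+h\kappa(\alpha)$ with $\kappa(\alpha)>0$ for $\alpha\in J\setminus\{\alpha_0\}$, so $\diff\lambda=\kappa(\alpha)\diff h$. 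This turns the matrix element into
\[\int_\RR \e^{-\iu th}\,g_\alpha(h)\diff h,\qquad g_\alpha(h):=\kappa(\alpha)\rho^\phi(\alpha,\lambda_h(\alpha)).\]
Thus the quantity is the Fourier transform (up to the factor $\sqrt{2\pi}$) of the probability-type density $g_\alpha$. The target $\|\phi\|^2\e^{-|t|}$ is exactly $\int_\RR \e^{-\iu th}g(h)\diff h$ with $g(h)=\frac{\|\phi\|^2}{\pi}\frac{1}{1+h^2}$, the characteristic function of the Cauchy distribution.

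The key steps are as follows. First, \eqref{evasym} gives the pointwise convergence $g_\alpha(h)\to g(h)$ for every fixed $h$. Second, we record that $g_\alpha\ge 0$ and $\int_\RR g_\alpha(h)\diff h=\langle E_\alpha(\RR)\phi,\phi\rangle=\|\phi\|^2=\int_\RR g$, since $H_\alpha$ has no singular part. Third, Scheffé's lemma (the Lemma \ref{Halmos} already used in the proof of Corollary \ref{SpecCor}) upgrades pointwise convergence with equal total masses to convergence in $L^1(\RR)$, that is, $\|g_\alpha-g\|_{L^1}\to0$. Finally, we estimate
\[\sup_{t\in\RR}\left|\int_\RR \e^{-\iu th}(g_\alpha(h)-g(h))\diff h\right|\le \|g_\alpha-g\|_{L^1}\to 0 .\]
This yields convergence uniformly in $t$, and it only remains to compute $\int \e^{-\iu th}\frac{1}{\pi(1+h^2)}\diff h=\e^{-|t|}$, for instance by residues.

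The main point needing care is the application of Scheffé's lemma: pointwise convergence alone would not give uniformity in $t$. The argument relies on the facts that $g_\alpha\ge0$ and that there is no mass loss, because $\phi$ is entirely absolutely continuous for $H_\alpha$ when $\alpha\neq\alpha_0$ (Theorem \ref{EmbeddedDisappear}(b)). Because the limit is taken along a continuous parameter, we apply the argument along arbitrary sequences $\alpha_n\to\alpha_0$, which suffices.
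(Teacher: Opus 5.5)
Your proposal is correct and follows essentially the same route as the paper. You rewrite the matrix element as $\int_\RR \e^{-\iu th}g_\alpha(h)\diff h$ with $g_\alpha(h)=\kappa(\alpha)\rho^\phi(\alpha,\lambda_h(\alpha))$, take pointwise convergence from \eqref{evasym}, use equal total mass $||\phi||^2$ and Lemma \ref{Halmos} to get $L^1$ convergence, and conclude uniform convergence to the Fourier transform of the Cauchy density; your remark about passing to sequences $\alpha_n\to\alpha_0$, since Lemma \ref{Halmos} is stated for sequences, makes explicit a point the paper leaves implicit.
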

 \begin{proof}
 For $h\in\RR,$ define \[g_\alpha(h):=\kappa(\alpha)\rho^\phi(\alpha,\lambda_h(\alpha)),\  g(h)=\frac{||\phi||^2}{\pi}\frac{1}{h^2+1}\] Then by \eqref{vw asym}, $g_\alpha\to g$ pointwise as $\alpha\to\alpha_0.$
    Observe that, \[\int_\RR g_\alpha(h)\diff h=\int_\RR g(h)\diff h=||\phi||^2\]
    So, by Lemma \ref{Halmos}, we conclude that $g_\alpha\to g$ in $L^1$ norm as $\alpha\to\alpha_0.$ This also implies that  $\widehat{g_\alpha}\to\widehat{g}$ uniformly on $\RR.$ Note that $\sqrt{2\pi}\hat{g}(t)=||\phi||^2\e^{-|t|}$  and
     \begin{align*}\langle \e^{-\iu t\frac{H_\alpha-\lambda(\alpha)}{\kappa(\alpha)}}\phi,\phi \rangle&=\int_\RR\e^{-\iu t\frac{\lambda-\lambda(\alpha)}{\kappa(\alpha)}}\rho^\phi(\alpha,\lambda)\diff\lambda\\&=\int_\RR\e^{-\iu th}\kappa(\alpha)\rho^{\phi}(\alpha,\lambda_h(\alpha))\diff h\\&=\sqrt{2\pi}\hat{g_\alpha}(t)\end{align*}
 Thus $\lim\limits_{\alpha\to\alpha_0}\langle\e^{-\iu t\frac{H_\alpha-\lambda(\alpha)}{\kappa(\alpha)}}\phi,\phi\rangle=||\phi||^2\e^{-|t|}$ uniformly in $t$.\end{proof}

\section{Sojourn time and its properties} \label{5}

The sojourn time for a state $v\in L^2(\RR)$ with respect to the time evolution generated by the self-adjoint operator $H_\alpha$ is given by:
\[\tau_\alpha(v):=\int_{-\infty}^{\infty}|\langle\e^{-\iu tH_\alpha}v,v\rangle|^2\diff t\] In the context of our model, in the next theorem, we analyze  $\int_{-\infty}^{\infty}|\langle\e^{-\iu tH_\alpha}v,w\rangle|^2\diff t$ for a pair of vectors $v,w$ in a suitable dense subset of $L^2(\RR)$, for $\alpha\neq\alpha_0$.
\begin{theorem}\label{S1} Let $v,w\in\mathcal S(\RR).$ Then for any $\alpha\neq\alpha_0$, \[\tau_{\alpha}(v,w):=\int_{-\infty}^{\infty}|\langle\e^{-\iu tH_\alpha}v,w\rangle|^2\diff t<\infty\]  \end{theorem}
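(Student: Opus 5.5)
The plan is to use Plancherel in the time variable. For $\alpha\neq\alpha_0$, $H_\alpha$ is purely absolutely continuous by Theorem \ref{EmbeddedDisappear}(b). Hence
\[\langle \e^{-\iu tH_\alpha}v,w\rangle=\int_\RR \e^{-\iu t\lambda}\rho^{v,w}(\alpha,\lambda)\diff\lambda=\sqrt{2\pi}\,\widehat{\rho^{v,w}(\alpha,\cdot)}(t).\]
So if $\rho^{v,w}(\alpha,\cdot)\in L^1(\RR)\cap L^2(\RR)$, Plancherel gives
\[\tau_\alpha(v,w)=2\pi\int_\RR|\rho^{v,w}(\alpha,\lambda)|^2\diff\lambda<\infty.\]
Membership in $L^1$ is automatic, since $|\rho^{v,w}|\le (\rho^{v}\rho^{w})^{1/2}$ and each $\rho^{v}$ integrates to $\|v\|^2$. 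The task therefore reduces to showing $\rho^{v,w}(\alpha,\cdot)\in L^2(\RR)$, and I would do this using the explicit formula \eqref{DensityForvw} of Lemma \ref{SpecDe}(a).

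I would then bound each term of \eqref{DensityForvw}. The first term $v\overline{w}$ is Schwartz, so it lies in $L^2$. The key quantities are the coefficients $\alpha F_1/|F|^2$ and $\alpha F_2/|F|^2$, and the crucial fact is that $|F(\alpha,\lambda)|$ is bounded below by a positive constant on all of $\RR$ when $\alpha\neq\alpha_0$. On compact sets this follows from continuity of $F$ together with $\Gamma_\alpha=\emptyset$, as shown in the proof of Theorem \ref{EmbeddedDisappear}(b). Near infinity, Proposition \ref{pro2}(b) gives $\gamma(|u|^2,\lambda)\to0$, so $F_1(\alpha,\lambda)\to1$ and hence $|F|\ge 1/2$ for $|\lambda|$ large. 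Together these show $1/|F|$ is bounded on $\RR$, and since $|F_j|\le|F|$, both $|F_1|/|F|^2$ and $|F_2|/|F|^2$ are bounded by $\sup 1/|F|<\infty$.

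With this uniform bound in hand, the remaining pieces are straightforward. The factors $\gamma(v\overline{u},\cdot)$ and $\gamma(u\overline{w},\cdot)$ are bounded and lie in $C_0^\infty\cap L^2$ by Proposition \ref{pro2}(b),(e). Multiplying them by the Schwartz functions $u\overline{w}$, $v\overline{u}$, or $v\overline{w}|u|^2$ gives $L^2$ functions. The last term $\gamma(v\overline{u},\cdot)\gamma(u\overline{w},\cdot)$ is a product of a bounded function and an $L^2$ function, so it is also in $L^2$. Hence $\rho^{v,w}(\alpha,\cdot)\in L^2$, and Plancherel finishes the argument.

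The main obstacle I expect is the uniform lower bound on $|F|$, specifically the compact-set part. Since $\Gamma_\alpha=\emptyset$, $F$ never vanishes, so continuity gives a positive minimum on any compact interval. I would double-check the point $\lambda_0$ explicitly: there $F_2=0$, but $F_1(\alpha,\lambda_0)=1-\alpha/\alpha_0\neq0$, so $F(\alpha,\lambda_0)\neq0$ as required.
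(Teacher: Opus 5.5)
Your proof is correct and follows the paper's route. Plancherel reduces $\tau_\alpha(v,w)$ to $2\pi\|\rho^{v,w}(\alpha,\cdot)\|_{L^2}^2$, and the uniform lower bound $|F(\alpha,\cdot)|\ge c_\alpha>0$ (from $F$ never vanishing and $F\to1$ at $\pm\infty$) then controls the explicit density formula. The only difference is cosmetic: the paper shows $\rho^{v,w}(\alpha,\cdot)$ is bounded and uses $L^1\cap L^\infty\subset L^2$, whereas you check $L^2$ membership term by term, using $\gamma(u\overline{w},\cdot)\in L^2$ for the last term.
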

 \begin{proof}
For $\alpha\neq\alpha_0,$ by an application of the Plancherel's theorem,
\[\tau_\alpha(v,w)=\int_\RR\left|\int_\RR\e^{-\iu t\lambda}\rho^{v,w}(\alpha,\lambda)\diff\lambda\right|^2\diff t=2\pi\int_\RR|{\rho^{v,w}}(\alpha,\lambda)|^2\diff\lambda\]
Since $u\in S(\RR)$ and $\gamma(|u|^2,\cdot)\in C_0(\RR)$,  $\lim\limits_{\lambda\to\pm\infty} F(\alpha,\lambda)=1.$ As $F(\alpha,\lambda)$ never vanishes for any $\lambda$, there exists a constant $c_\alpha>0$ such that $|F(\alpha,\lambda)|\geq c_\alpha>0$ for all $\lambda\in\RR.$ Since all the functions $u,v,w,\gamma(u\overline{v},\cdot),\gamma({v\overline{w}},\cdot),\gamma({u\overline{w}})$ are bounded, it follows from the expression of $\rho^{v,w}(\alpha,\cdot)$ in \eqref{DensityForvw} that there exists a constant $\Tilde{c_\alpha}>0$ satisfying $|\rho^{v,w}(\alpha,\lambda)|\leq \Tilde{c_\alpha}$ for all $\lambda\in\RR.$ Since $\rho^{v, w}(\alpha,\cdot) \in L^1(\RR)$, it follows that  $\rho^{v, w}(\alpha,\cdot) \in L^2(\RR)$, which completes the proof.\end{proof}
In particular, for the eigenvector $ \phi$, we have $\tau_\alpha(\phi)<\infty$ for $\alpha\neq\alpha_0$. Note that $\tau_{\alpha_0}(\phi)=\infty$ and in the next theorem we derive a lower bound for $\tau_\alpha(\phi)$ for $\alpha$ near $\alpha_0.$
 \begin{theorem}\label{Sojourn}
     There exists $\delta>0$ such that \[\tau_\alpha(\phi)>\frac{||\phi||^4}{4\kappa(\alpha)}\ \ \text{for}\ |\alpha-\alpha_0|<\delta\]
 \end{theorem}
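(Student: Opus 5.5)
We express the sojourn time through the spectral density and then rescale energy around $\lambda(\alpha)$ so that Theorem \ref{thm4} applies. As in the proof of Theorem \ref{S1}, Plancherel's theorem gives, for $\alpha\neq\alpha_0$,
\[\tau_\alpha(\phi)=2\pi\int_\RR|\rho^\phi(\alpha,\lambda)|^2\diff\lambda .\]
Substituting $\lambda=\lambda_h(\alpha)=\lambda(\alpha)+h\kappa(\alpha)$, for which $\diff\lambda=\kappa(\alpha)\diff h$, yields
\[\kappa(\alpha)\,\tau_\alpha(\phi)=2\pi\int_\RR\left|\kappa(\alpha)\rho^\phi(\alpha,\lambda_h(\alpha))\right|^2\diff h=2\pi\int_\RR g_\alpha(h)^2\diff h,\]
where $g_\alpha(h)=\kappa(\alpha)\rho^\phi(\alpha,\lambda_h(\alpha))\ge 0$ is the function used in the proof of Corollary \ref{exp}.

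The key step is a liminf bound. By \eqref{evasym}, $g_\alpha\to g$ pointwise, where $g(h)=\frac{\|\phi\|^2}{\pi}\frac{1}{h^2+1}$. Fatou's lemma applied to $g_\alpha^2$ then gives
\[\liminf_{\alpha\to\alpha_0}\kappa(\alpha)\tau_\alpha(\phi)\ \ge\ 2\pi\int_\RR g(h)^2\diff h=2\pi\frac{\|\phi\|^4}{\pi^2}\int_\RR\frac{\diff h}{(1+h^2)^2}=2\pi\frac{\|\phi\|^4}{\pi^2}\cdot\frac{\pi}{2}=\|\phi\|^4 .\]
Since $\|\phi\|^4>\|\phi\|^4/4$, there is $\delta>0$ (which we also take small enough that $(\alpha_0-\delta,\alpha_0+\delta)\subset J$) such that $\kappa(\alpha)\tau_\alpha(\phi)>\|\phi\|^4/4$ for $0<|\alpha-\alpha_0|<\delta$. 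Dividing by $\kappa(\alpha)>0$ gives the claim. At $\alpha=\alpha_0$ we have $\tau_{\alpha_0}(\phi)=\infty$ and $\kappa(\alpha_0)=0$, so the inequality holds trivially or is understood in the extended sense.

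No uniform domination is needed, because Fatou's lemma only requires pointwise convergence and nonnegativity. The step that needs care is the pointwise limit \eqref{evasym}: it must hold for every fixed $h$, and it does, by Theorem \ref{thm4}. One should also check that $\rho^\phi(\alpha,\cdot)$ is the genuine density for $\alpha\ne\alpha_0$, which follows from the purely absolutely continuous spectrum established in Theorem \ref{EmbeddedDisappear}(b). The constant $1/4$ is far from sharp: the argument actually gives any constant below $\|\phi\|^4$. So the main obstacle is only justifying the Plancherel identity and the change of variables, and both are routine given Theorem \ref{S1}.
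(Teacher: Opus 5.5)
Your proof is correct, but it takes a different route from the paper's. The paper works in the time domain and never invokes Plancherel. It truncates the integral to $|t|\le T/\kappa(\alpha)$ and rescales time. It then uses the decay law of Corollary \ref{exp} together with dominated convergence on the bounded interval to get $\kappa(\alpha)\tau_\alpha(\phi,T/\kappa(\alpha))\to\|\phi\|^4(1-\e^{-2T})$, and finally fixes $T_0$ with $1-\e^{-2T_0}>1/2$.

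You instead pass to the energy side, writing $\kappa(\alpha)\tau_\alpha(\phi)=2\pi\|g_\alpha\|_{L^2}^2$, and apply Fatou's lemma. This needs only the pointwise limit \eqref{evasym}. The paper's route needs more: pointwise convergence of the Fourier transforms in Corollary \ref{exp} rests on the $L^1$ convergence $g_\alpha\to g$ supplied by Lemma \ref{Halmos}. Your price is the identity $\tau_\alpha(\phi)=2\pi\int|\rho^\phi(\alpha,\lambda)|^2\,d\lambda$, which the proof of Theorem \ref{S1} already provides.

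Both are really lower-semicontinuity arguments; truncation in $t$ is the time-side counterpart of Fatou. Both give $\liminf_{\alpha\to\alpha_0}\kappa(\alpha)\tau_\alpha(\phi)\ge\|\phi\|^4$ (the paper's after letting $T_0\to\infty$), so in either approach the factor $1/4$ can be replaced by any number below $1$.

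The paper's version makes the physical mechanism explicit: the state survives over times of order $1/\kappa(\alpha)$. Yours is shorter. It also shows what an exact limit would require, namely $g_\alpha\to g$ in $L^2$ rather than merely in $L^1$; a uniform bound on $\sup_h g_\alpha(h)$ would suffice. This is the energy-side form of the difficulty noted in the Remark following the theorem.
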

 \begin{proof}
For $\alpha\in J\setminus\{\alpha_0\}$ and $T\in(0,\infty),$ define \begin{equation}\label{truncated}
 \tau_\alpha(\phi,T):=\int_{-T}^T|\langle\e^{-\iu tH_\alpha}\phi,\phi\rangle|^2\diff t.
\end{equation} Then $\lim\limits_{T\to\infty}\tau_\alpha(\phi, T)=\tau_\alpha(\phi).$ Replacing $T$ by $T/\kappa(\alpha)$ in \eqref{truncated}, we get
\[\kappa(\alpha)\tau_\alpha\left(\phi,T/\kappa(\alpha)\right)=\int_{-T}^T\left|\left\langle\e^{-\iu t\frac{H_\alpha-\lambda(\alpha)}{\kappa(\alpha)}}\phi,\phi\right\rangle\right|^2\diff t\] Since for any $\alpha$ near $\alpha_0,\, t\in\RR,$ \[\left|\langle\e^{-\iu t\frac{H_\alpha-\lambda(\alpha)}{\kappa(\alpha)}}\phi,\phi\rangle\right|\leq ||\phi||^2\]  by Corollary \ref{exp}, \[\lim_{\alpha\to\alpha_0}\left|\langle\e^{-\iu t\frac{H_\alpha-\lambda(\alpha)}{\kappa(\alpha)}}\phi,\phi\rangle\right|^2=||\phi||^4\e^{-2|t|}\]Thus for any finite $T$, by the Lebesgue dominated convergence theorem, we get\[\lim_{\alpha\to\alpha_0}\kappa(\alpha)\tau_\alpha\left(\phi,T/\kappa(\alpha)\right)=||\phi||^4\int_{-T}^{T}\e^{-2|t|}\diff t=||\phi||^4(1-\e^{-2T})\]Choose $T_0 > 0$ such that $1 - \e^{-2T_0} > \frac{1}{2}$ and for this $T_0$, we have a $\delta > 0$ such that for $|\alpha - \alpha_0|< \delta$, \[\kappa(\alpha) \tau_\alpha\left(\phi, \frac{T_0}{\kappa(\alpha)}\right) > \|\phi\|^4 (1 - \e^{-2T_0}) - \frac{\|\phi\|^4}{4} > \frac{\|\phi\|^4}{4}\]
Since $\tau_\alpha(\phi) \geq \tau_\alpha\left(\phi, \frac{T_0}{\kappa(\alpha)}\right)$, it follows that
$\tau_\alpha(\phi) > \frac{\|\phi\|^4}{4\kappa(\alpha)}.$\end{proof}
 \begin{remark}
     There are mathematical difficulties in obtaining an ``exact limiting form" for the sojourn time $\tau_\alpha(\phi)$ as done in \cite{Fernandez1}, however the lower bound in Theorem \ref{Sojourn} gives an estimate on the rate of divergence of $\tau_\alpha(\phi)$ as $\alpha\to\alpha_0.$
 \end{remark}
\section{Scattering theory and Krein's spectral shift function}\label{6}
\noindent For the pair of operators $H_0$ and $H_\alpha$ with $\alpha\neq\alpha_0$ in our model, note that $H_\alpha-H_0\in\mathcal B_1(L^2(\RR))$. Hence by Theorem 6.2.1 in \cite{Yafaev}, the wave operators \[\Omega^{(\alpha)}_{\pm}:= \slim_{t \to \pm \infty} \e^{\iu tH_\alpha } \e^{-\iu t H_0 }\] exist and are complete. Consider the scattering operator $S^{(\alpha)}=\Omega_+^{(\alpha)*} \Omega_-^{(\alpha)}$ corresponding to the pair $(H_0,H_\alpha).$ Since $S^{(\alpha)}$ is a unitary operator which commutes with $H_0,$ it is a multiplication operator by the function $S^{(\alpha)}(x)$ of modulus 1 for $x\in\RR.$ As $H_0$ is already diagonalized in $L^2(\RR)$, by Theorem 6.7.3 in \cite{Yafaev}, we have \[
    S^{(\alpha)}(x)=1-2\pi\iu\alpha\frac{|u(x)|^2}{F(\alpha,x)}\] Note that right-hand side is well defined as $F(\alpha,x)$ never vanishes for $\alpha\neq\alpha_0.$ Then by \eqref{F1F2defn},\begin{equation}\label{Sexpression}S^{(\alpha)}(x)=1-2\iu \frac{F_2(\alpha,x)}{F(\alpha,x)}=\frac{\overline{F(\alpha,x)}}{F(\alpha,x)}=\e^{-2\pi\iu\xi_\alpha(x)}
\end{equation}where \[\xi_\alpha(x)=\frac{1}{\pi}\arg F(\alpha,x)\] is the  Krein's spectral shift function for the pair of operators $(H_0,H_\alpha)$ and  '$\arg$' denotes the principal branch of the argument function (c.f. \cite{Yafaev} for more details on spectral shift function). We now study the behaviour of $R^{(\alpha)}(x)=S^{(\alpha)}(x)-1$, and that of $|R^{(\alpha)}(x)|^2$ which we interpret as total scattering cross-section at ``energy'' $x$ by analogy with the space-time picture of scattering theory in higher dimensions~\cite{KBSBook}. 
\begin{theorem}\label{thm100}
 For any fixed $h\in\RR,$\begin{equation}\label{11} \lim_{\alpha\to\alpha_0}R^{(\alpha)}(\lambda_h(\alpha))=-\frac{2\iu}{h+\iu}\text{ and }\lim_{\alpha\to\alpha_0}|R^{(\alpha)}(\lambda_h(\alpha))|^2=\frac{4}{h^2+1}\end{equation}
\end{theorem}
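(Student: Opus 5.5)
We start from the representation \eqref{Sexpression}, which gives
\[R^{(\alpha)}(x)=S^{(\alpha)}(x)-1=-2\iu\,\frac{F_2(\alpha,x)}{F(\alpha,x)}=-2\iu\,\frac{F_2(\alpha,x)}{F_1(\alpha,x)+\iu F_2(\alpha,x)}.\]
The idea is to evaluate this at $x=\lambda_h(\alpha)$ and divide numerator and denominator by $\kappa(\alpha)$. This is legitimate because $\kappa(\alpha)>0$ for $\alpha\in J\setminus\{\alpha_0\}$. It yields
\[R^{(\alpha)}(\lambda_h(\alpha))=-2\iu\,\frac{F_2(\alpha,\lambda_h(\alpha))/\kappa(\alpha)}{F_1(\alpha,\lambda_h(\alpha))/\kappa(\alpha)+\iu\,F_2(\alpha,\lambda_h(\alpha))/\kappa(\alpha)}.\]

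Next we invoke Lemma \ref{lemma1}(a). As $\alpha\to\alpha_0$, the ratio $F_1/\kappa$ tends to $h\alpha_0\|\phi\|^2$ and the ratio $F_2/\kappa$ tends to $\alpha_0\|\phi\|^2$. The limiting denominator is therefore $\alpha_0\|\phi\|^2(h+\iu)$, which is nonzero because $\alpha_0\neq0$ (forced by $1+\alpha_0\gamma(|u|^2,\lambda_0)=0$) and $\phi\neq0$. So the quotient of limits is legitimate, and the common factor $\alpha_0\|\phi\|^2$ cancels to give
\[\lim_{\alpha\to\alpha_0}R^{(\alpha)}(\lambda_h(\alpha))=-\frac{2\iu}{h+\iu}.\]

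An equivalent route would be to write $R^{(\alpha)}=-2\iu F_2\overline{F}/|F|^2$ and use Lemma \ref{lemma1}(b),(c) for the real and imaginary parts; the direct quotient above is shorter.

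The second limit follows from continuity of $z\mapsto|z|^2$:
\[\lim_{\alpha\to\alpha_0}|R^{(\alpha)}(\lambda_h(\alpha))|^2=\frac{4}{|h+\iu|^2}=\frac{4}{h^2+1}.\]

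The only point needing care is that all quantities are defined for $\alpha$ close to $\alpha_0$. For $\alpha\in J\setminus\{\alpha_0\}$ we have $F(\alpha,\cdot)\neq0$ everywhere by Theorem \ref{EmbeddedDisappear}, so $S^{(\alpha)}$ and the division by $\kappa(\alpha)$ make sense, and the limit is taken over such $\alpha$. Beyond this there is no real obstacle: the substantive work is already contained in Lemma \ref{lemma1}(a).
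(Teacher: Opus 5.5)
Your proposal is correct and follows the paper's proof: write $R^{(\alpha)}(\lambda_h(\alpha))=-2\iu F_2/F$ at $\lambda_h(\alpha)$, divide numerator and denominator by $\kappa(\alpha)$, apply Lemma~\ref{lemma1}(a), and get the second limit from continuity of the modulus. Your extra checks (that $\alpha_0\|\phi\|^2(h+\iu)\neq 0$, and that $F(\alpha,\cdot)$ and $\kappa(\alpha)$ do not vanish for $\alpha\in J\setminus\{\alpha_0\}$) fill in details the paper leaves implicit.
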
\begin{proof}Note that
\[R^{(\alpha)}(\lambda_h(\alpha)))=-2\iu\frac{F_2(\alpha,\lambda_h(\alpha))}{F(\alpha,\lambda_h(\alpha))}\] 
By applying \eqref{F1F2} the theorem follows.
\end{proof}
In the following theorem, we obtain the behaviour of $\xi'_\alpha$ near $\lambda_0$ as $\alpha$ goes to $\alpha_0.$
\begin{theorem}\label{ssf thm} For each fixed $h\in\RR,$ \[\lim_{\alpha\to\alpha_0}\kappa(\alpha)\xi_\alpha'(\lambda_h(\alpha))=-\frac{1}{\pi(h^2+1)}\]   \end{theorem}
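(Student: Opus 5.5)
The plan is to differentiate the explicit formula $\xi_\alpha(x)=\frac{1}{\pi}\arg F(\alpha,x)$ and then reuse the limits of Lemma \ref{lemma1}. First I would justify differentiability. For $\alpha\neq\alpha_0$, $F(\alpha,\cdot)$ is smooth and nonvanishing. By Lemma \ref{lemma1}(a), $F_2(\alpha,\lambda_h(\alpha))/\kappa(\alpha)\to\alpha_0\|\phi\|^2\neq 0$. Hence, for $\alpha$ close to $\alpha_0$, $F_2(\alpha,x)$ has the fixed sign of $\alpha_0$ for $x$ in a neighbourhood of $\lambda_h(\alpha)$. So $F(\alpha,\cdot)$ stays in an open half-plane there, where the principal branch of $\arg$ is smooth. (In any case, any continuous branch of $\arg$ has the same derivative.) This gives
\[
\xi_\alpha'(x)=\frac{1}{\pi}\,\Im\frac{\partial_x F(\alpha,x)}{F(\alpha,x)}=\frac{1}{\pi}\,\frac{F_1(\alpha,x)\,\partial_xF_2(\alpha,x)-F_2(\alpha,x)\,\partial_xF_1(\alpha,x)}{|F(\alpha,x)|^2}.
\]

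Next I would evaluate this at $x=\lambda_h(\alpha)$, multiply by $\kappa(\alpha)$, and split it into two terms:
\[
\kappa(\alpha)\xi_\alpha'(\lambda_h(\alpha))=\frac{1}{\pi}\left[\frac{\kappa F_1}{|F|^2}\,\partial_xF_2-\frac{\kappa F_2}{|F|^2}\,\partial_xF_1\right]_{x=\lambda_h(\alpha)}.
\]

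For the first term, \eqref{F1limit} says that $\kappa F_1/|F|^2$ has the finite limit $\frac{1}{\alpha_0\|\phi\|^2}\frac{h}{h^2+1}$. Meanwhile $\partial_xF_2(\alpha,\lambda_h(\alpha))=\alpha\pi(|u|^2)'(\lambda_h(\alpha))$ tends to $\alpha_0\pi(|u|^2)'(\lambda_0)=0$, since $|u|^2$ is smooth and has a zero, hence a minimum, at $\lambda_0$. So the first term vanishes in the limit.

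For the second term, $\partial_xF_1(\alpha,\lambda_h(\alpha))=\alpha\,\gamma((|u|^2)',\lambda_h(\alpha))$ by Proposition \ref{pro2}(b). By continuity and \eqref{gammaderivative}, this converges to $\alpha_0\|\phi\|^2$. Combining with \eqref{F2limit}, the second term tends to
\[
-\frac{1}{\alpha_0\|\phi\|^2}\frac{1}{h^2+1}\cdot\alpha_0\|\phi\|^2=-\frac{1}{h^2+1}.
\]
Including the factor $\frac{1}{\pi}$ yields the claim.

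The only delicate point is the first step, namely the smoothness of the principal argument near $\lambda_h(\alpha)$. The half-plane observation above handles it, and the remaining steps are direct applications of Lemma \ref{lemma1}.
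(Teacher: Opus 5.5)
Your proposal is correct and follows the paper's proof: differentiate $\xi_\alpha=\frac{1}{\pi}\arg F$ to get $\frac{1}{\pi}(F_1\partial_\lambda F_2-F_2\partial_\lambda F_1)/|F|^2$, then take the limit using \eqref{F1limit}, \eqref{F2limit}, $\partial_\lambda F_2(\alpha_0,\lambda_0)=0$ and $\partial_\lambda F_1(\alpha_0,\lambda_0)=\alpha_0\|\phi\|^2$. You also check that $F(\alpha,\cdot)$ stays in an open half-plane near $\lambda_h(\alpha)$, so the principal argument is differentiable there; the paper leaves this step unstated, and your argument for it is correct.
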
\begin{proof} By~\eqref{Sexpression}, we get
\begin{equation}\label{eq13}\xi_\alpha'(\lambda)=\frac{1}{\pi}\dfrac{F_1(\alpha,\lambda)\frac{\partial F_2}{\partial\lambda}(\alpha,\lambda)-F_2(\alpha,\lambda)\frac{\partial F_1}{\partial\lambda}(\alpha,\lambda)}{|F(\alpha,\lambda)|^2} \end{equation}
Substituting $\lambda=\lambda_h(\alpha)$ in \eqref{eq13} and multiplying it by $\kappa(\alpha),$ we get\[\kappa(\alpha)\xi_\alpha'(\lambda_h(\alpha))=\frac{1}{\pi}\dfrac{\kappa(\alpha)F_1(\alpha,\lambda_h(\alpha))\frac{\partial F_2}{\partial\lambda}(\alpha,\lambda_h(\alpha))}{|F(\alpha,\lambda_h(\alpha))|^2}-\frac{1}{\pi}\frac{\kappa(\alpha)F_2(\alpha,\lambda_h(\alpha))\frac{\partial F_1}{\partial\lambda}(\alpha,\lambda_h(\alpha))}{|F(\alpha,\lambda_h(\alpha))|^2}\]
Now using \eqref{F1limit}, \eqref{F2limit} together with $\dfrac{\partial F_1}{\partial\lambda}(\alpha_0,\lambda_0)=\alpha_0||\phi||^2,~\dfrac{\partial F_2}{\partial\lambda}(\alpha_0,\lambda_0)=0$, the proof follows.
\end{proof}
In analogy with physical scattering theory where the unperturbed operator is Laplacian, which will be discussed in the next section~(see \eqref{TD1}), the time delay in scattering at ``energy" $\lambda$ is given as a multiple of the derivative of spectral shift function. However by analogy with usual space-time formulation, there is no scope for making such a space-time definition in this model (since the ``space" variable looks like the ``energy" variable) and we take $-2\pi\xi_\alpha'(\lambda)$ to be the definition of time delay  at ``energy" $\lambda$.
\section{An application to perturbation of Laplacian in $\RR^3$}\label{7}
Consider the self-adjoint extension $H_0$ of the Laplacian operator $-\Delta=-\sum_{i=1}^3\frac{\partial^2}{\partial x_i^2}$ on the Hilbert space $L^2(\RR^3)$, with  the domain\[D(H_0)=\left\{v\in L^2(\RR^3):\int_{\RR^3}\left||\xi|^2\hat v(\xi)\right|^2<\infty\right\}\] where $\hat v$ is the Fourier transform of $v$ in  $L^2$-sense.
Then $H_0$ is a positive self-adjoint operator. Let $u\in L^2(\RR^3).$ For $\alpha\in\RR,$ define  \[H_\alpha =H_0+\alpha\langle\cdot,u\rangle u\quad\text{ on } L^2(\RR^3)\]
Then  $H_\alpha$ is a self-adjoint operator with $D(H_\alpha)=D(H_0).$ Since the perturbation is rank-one, the essential spectrum of $H_\alpha$ is $[0,\infty)$. We denote the resolvent and spectral measure of $H_\alpha$ by $R_\alpha$ and $E_\alpha$ respectively. In order to apply some of the results from the previous model, it is convenient to place the problem in a unitarily isomorphic Hilbert space $ L^2([0,\infty), L^2(S^2))$ via the isomorphism $U$ defined by\begin{equation}\label{iso}(Uv)_\lambda(\omega)=2^{-\frac{1}{2}}\lambda^{\frac{1}{4}}\widehat{v}(\sqrt{\lambda}\omega)\equiv v_\lambda(\omega)\quad \text{ for almost every }\lambda\in[0,\infty),\omega\in S^2\end{equation} where $S^2$ is the unit sphere in $\RR^3$ with the surface area measure and  the set $[0,\infty)$  with  the Lebesgue measure. We denote the inner product and the norm on $L^2(S^2)$ by  $\langle\langle\cdot,\cdot\rangle\rangle$ and $|||\cdot|||$ respectively. Then \[\langle v,w\rangle=\int_0^\infty \langle\langle v_\lambda,w_\lambda\rangle\rangle\diff\lambda\] 
With a slight abuse of notation, for $v,w\in L^2(\RR^3)$, define 
\[\langle\langle v,w\rangle\rangle(\lambda):=\langle \langle v_\lambda,w_\lambda\rangle\rangle\quad \text{ for almost every } \lambda\in(0,\infty)\] and extend it by setting $0$ for $\lambda\le 0$.
Consider the space $\mathcal{E}((0,\infty), L^2(S^2))$ of all strongly smooth functions $v \in L^2([0,\infty), L^2(S^2))$ such that $|||v_\lambda|||$ and $|||v_{(1/\lambda)}|||$ decreases rapidly as $\lambda\to \infty$. Then $\mathcal{E}((0,\infty), L^2(S^2))$ is dense in $L^2([0,\infty),L^2(S^2)).$ We denote the $n$-th strong derivative $\frac{\diff^n}{\diff\lambda^n}{u_\lambda}$ at $\lambda=s$ by $u^{(n)}_s.$
For $v,w\in \mathcal{E}((0,\infty), L^2(S^2))$, note that the function $\langle\langle v,w\rangle\rangle \in\mathcal S(\RR)$ and by Proposition \ref{Plemelj-Privalov Theorem}, we have  \begin{equation}\lim_{\epsilon\to 0^+}\langle R_0(\lambda\pm\iu\epsilon)v,w\rangle=\lim_{\epsilon\to0^+}\int_\RR\frac{\langle\langle v,w\rangle\rangle(x)}{x-(\lambda+\iu\epsilon)}\diff x=\gamma(\langle\langle v,w\rangle\rangle,\lambda)\pm\iu\pi \langle\langle v_\lambda,w_\lambda\rangle\rangle\end{equation}
 uniformly for $\lambda$ in compact subsets of $\RR$, where $\gamma(f,\cdot)$ is the Cauchy principal value of $f$ defined in \eqref{PVdefn}. Suppose $u\in\mathcal{E}((0,\infty), L^2(S^2))$, then for $\alpha\in\RR,\lambda>0$, define \begin{align*}
     G(\alpha,\lambda):= \lim_{\epsilon\to 0^+}(1+\alpha\langle R_0(\lambda+\iu\epsilon)u,u\rangle)=G_1(\alpha,\lambda)+\iu G_2(\alpha,\lambda)
\end{align*} where
\begin{equation}\label{G} G_1(\alpha,\lambda):=1+\alpha\gamma(\langle\langle u,u\rangle\rangle,\lambda),\ G_2(\alpha,\lambda):=\alpha\pi\langle\langle u,u\rangle\rangle(\lambda)=\alpha\pi|||u_\lambda|||^2\end{equation} We have the following analogue of Theorem \ref{EmbeddedDisappear}.  \begin{theorem} Let $u\in\mathcal E((0,\infty),L^2(S^2))$ be fixed. Then, we have
    \begin{enumerate}[label=(\alph*)]
   \item $\lambda_0>0$ is an eigenvalue of $H_{\alpha_0}$ if and only if $u_{\lambda_0}=0$ as a vector in $L^2(S^2)$ and $1+\alpha_0\gamma(\langle\langle u,u\rangle\rangle,\lambda_0)=0$. In this case, $\lambda_0$ is a simple eigenvalue with the eigenvector $\phi$ where \begin{equation}  \phi_\lambda=\begin{cases}
        \frac{ u_\lambda}{\lambda-\lambda_0} &\text{ if }\lambda\neq\lambda_0\\u'_{\lambda_0}&\text{ if }\lambda=\lambda_0
    \end{cases} \quad\text{ as vectors in }L^2(S^2)\end{equation}Further, $\phi\in\mathcal E((0,\infty),L^2(S^2)).$\item Suppose $\lambda_0,~\alpha_0>0$ and $u_\lambda=0$ in $L^2(S^2)$ only for $\lambda=\lambda_0$ in $(0,\infty)$ and $1+\alpha_0\gamma(\langle\langle u,u\rangle\rangle,\lambda_0)=0$. Then $\mathcal{H}_{\text{ac}}(H_{\alpha_0}) = \{\phi\}^\perp$ and $\H_{ac}(H_\alpha)=L^2(\RR^3)$ for $\alpha\neq\alpha_0$ and $\alpha>0$.
\end{enumerate}
\end{theorem}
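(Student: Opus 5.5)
The plan is to transport the problem through the unitary $U$ of \eqref{iso} and repeat the argument of Theorem~\ref{EmbeddedDisappear}, with scalar functions replaced by $L^2(S^2)$-valued ones. Under $U$, $H_0$ becomes multiplication by $\lambda$ on $L^2([0,\infty),L^2(S^2))$, and $H_\alpha$ becomes $M_\lambda+\alpha\langle\cdot,u\rangle u$.

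For part (a), suppose first that $\lambda_0>0$ is an eigenvalue with eigenvector $\psi$. Then $(\lambda-\lambda_0)\psi_\lambda=-\alpha_0\langle\psi,u\rangle u_\lambda$ for a.e.\ $\lambda$. If $\langle\psi,u\rangle=0$, then $\psi_\lambda=0$ a.e., so $\psi=0$; hence $\langle\psi,u\rangle\neq0$ and $\psi_\lambda=c\,u_\lambda/(\lambda-\lambda_0)$. For $\psi$ to be in $L^2$ we need $\int|||u_\lambda|||^2/(\lambda-\lambda_0)^2\,d\lambda<\infty$. Since $\lambda\mapsto u_\lambda$ is strongly smooth, $\lambda\mapsto u_\lambda$ is continuous, and this forces $u_{\lambda_0}=0$. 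Pairing the eigen-equation with $u$, i.e.\ inserting $\psi$ into $\langle\psi,u\rangle$, gives
\[
\langle\psi,u\rangle=-\alpha_0\langle\psi,u\rangle\int\frac{|||u_\lambda|||^2}{\lambda-\lambda_0}\,d\lambda .
\]
The integral converges absolutely and equals $\gamma(\langle\langle u,u\rangle\rangle,\lambda_0)$. This yields $1+\alpha_0\gamma(\langle\langle u,u\rangle\rangle,\lambda_0)=0$.

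Alternatively, one can invoke the analogue of Lemma~\ref{exception}(b): $\lambda_0\in\Gamma_{\alpha_0}$, and $G_2=0$ forces $|||u_{\lambda_0}|||=0$.

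Conversely, suppose $u_{\lambda_0}=0$. By the vector-valued analogue of Proposition~\ref{derivative}(a), $\phi$ is strongly smooth with $\phi_{\lambda_0}=u'_{\lambda_0}$. This follows from Taylor's formula $u_\lambda=(\lambda-\lambda_0)\int_0^1u'_{\lambda_0+s(\lambda-\lambda_0)}\,ds$, differentiated under the integral. Rapid decay of $|||\phi_\lambda|||$ and $|||\phi_{1/\lambda}|||$ as $\lambda\to\infty$ follows from that of $u$, because $1/|\lambda-\lambda_0|$ is bounded away from $\lambda_0$; near $\lambda\to0$ it is $\approx 1/\lambda_0$ since $\lambda_0>0$. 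Thus $\phi\in\mathcal E\subset D(H_0)$. A direct computation then gives
\[
(H_{\alpha_0}-\lambda_0)\phi=\bigl(1+\alpha_0\langle\phi,u\rangle\bigr)u=\bigl(1+\alpha_0\gamma(\langle\langle u,u\rangle\rangle,\lambda_0)\bigr)u=0 .
\]
Simplicity follows from the formula $\psi=c\phi$ derived above.

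For part (b), I would mirror the proof of Theorem~\ref{EmbeddedDisappear}(b). Lemma~\ref{exception} carries over verbatim for $v,w\in\mathcal E$ using the displayed Plemelj formula, so $\sigma_s(H_\alpha)$ is contained in the zero set of $G(\alpha,\cdot)$ in $(0,\infty)$.

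The points $\lambda\le0$ need separate care, and this is where the hypothesis $\alpha>0$ enters.
\begin{itemize}
\item For $\lambda<0$: $\langle R_0(\lambda)u,u\rangle=\int_0^\infty|||u_x|||^2/(x-\lambda)\,dx>0$, so $1+\alpha\langle R_0(\lambda)u,u\rangle>0$. Hence there is no singular spectrum there; indeed $H_\alpha\ge0$, since the perturbation is positive.
\item At $\lambda=0$: the same positivity gives $G_1(\alpha,0)\ge1$, and continuity of the boundary values near $0$ (using the decay of $u_{1/\lambda}$, i.e.\ vanishing of $u_\lambda$ to infinite order at $0$) excludes $0$ from the exceptional set.
\end{itemize}

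On $(0,\infty)$, $G_2(\alpha,\lambda)=\alpha\pi|||u_\lambda|||^2\ne0$ for $\lambda\ne\lambda_0$. At $\lambda_0$, $G_1(\alpha,\lambda_0)=1-\alpha/\alpha_0$, which vanishes only when $\alpha=\alpha_0$. So $\Gamma_\alpha=\emptyset$ for $\alpha\ne\alpha_0$ and $\Gamma_{\alpha_0}=\{\lambda_0\}$. The density of $\mathcal E$ together with Corollary 3.28 in \cite{Teschl} then gives $\mathcal H_{ac}(H_\alpha)=L^2(\RR^3)$ for $\alpha\ne\alpha_0$. For $\alpha=\alpha_0$, the singular subspace is supported on $\{\lambda_0\}$, hence spanned by $\phi$, so $\mathcal H_{ac}(H_{\alpha_0})=\{\phi\}^\perp$.

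The main obstacle is the behaviour at the threshold $\lambda=0$ and the regularity of $\phi$ in the $\mathcal E$ class near $\lambda=0$ and $\infty$. I would handle it through the positivity argument above and by treating $\langle\langle u,u\rangle\rangle$ as a Schwartz function on $\RR$ that vanishes on $(-\infty,0]$.
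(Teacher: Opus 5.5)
Your proposal is correct and takes essentially the same route as the paper. You transport the problem to the spectral representation of $H_0$, rerun the proof of Theorem~\ref{EmbeddedDisappear} with $|u|^2$ replaced by $|||u_\lambda|||^2$, and use $\alpha>0$ (positivity of $H_\alpha$, equivalently $G_1(\alpha,\lambda)\ge 1$ for $\lambda\le 0$) to dispose of the region $\lambda\le 0$; your direct eigen-equation argument in (a), the Taylor-formula proof that $\phi\in\mathcal E$, and the explicit treatment of $\lambda=0$ fill in details the paper leaves implicit, and they correctly exclude non-positive points where the paper's text says ``non-negative''.
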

\begin{proof}  First observe that, for $\alpha>0,$ $H_\alpha$ can not have any non-negative eigenvalue as $H_\alpha$ is strictly positive. Applying the arguments similar to what was employed in the  proof of Theorem \ref{EmbeddedDisappear}, parts~(a) and (b) follow.
\end{proof} 
\begin{definition}[Model in $\RR^3$]\label{model2} Let $\alpha_0$ and $\lambda_0$ be two fixed positive real numbers. For $\alpha>0$, let \begin{equation}\label{model22}
H_\alpha=H_0+\alpha\langle\cdot,u\rangle u\quad\text{ on }L^2(\RR^3)
\end{equation}
 where $u\in\mathcal E((0,\infty),L^2(S^2))$ with $||u||_{L^2(\RR^3)}=1$ satisfying $u_\lambda=0 $ in $L^2(S^2)$ only for $\lambda=\lambda_0$ in $(0,\infty)$ and, $1+\alpha_0\gamma(\langle\langle u,u\rangle\rangle,\lambda_0)=0$.
\end{definition}  All subsequent results in this section are with respect to this model.
\subsection{Density asymptotics and other spectral properties}
For the  model in the  above definition, we have the following analogues of the results in Section \ref{4} and \ref{5}.
\begin{theorem}For the operators $H_\alpha$, defined in \eqref{model22}, we have the following:\begin{enumerate}[label=(\alph*)]
    \item For $\alpha>0$ with $\alpha\neq\alpha_0$ and  $v,w\in \mathcal E((0,\infty),L^2(S^2))$, the spectral density $\rho^{v,w}(\alpha,\cdot)$ associated with the measure $\langle E_\alpha(\diff\lambda)v,w\rangle$ is given by: \begin{equation}\label{DensityForvw1}\begin{split}\rho^{v,w}(\alpha,\lambda)=&\langle\langle v_\lambda,w_\lambda\rangle\rangle-\alpha\frac{G_1(\alpha,\lambda)}{|G(\alpha,\lambda)|^2}\bigl[\gamma(\langle\langle v,u\rangle\rangle,\lambda)\,\langle\langle u_\lambda,w_\lambda\rangle\rangle+\gamma(\langle\langle u,w\rangle\rangle,\lambda) \,\langle\langle v_\lambda, u_\lambda\rangle\rangle\bigr]\\[5pt]&\quad +\alpha\frac{G_2(\alpha,\lambda)}{|G(\alpha,\lambda)|^2}\left[-\pi \langle\langle v_\lambda,w_\lambda\rangle\rangle\,||| u_\lambda|||^2+\frac{1}{\pi}\gamma(\langle\langle v, u\rangle\rangle,\lambda)\,\gamma(\langle\langle u,w\rangle\rangle,\lambda)\right]\end{split}\end{equation}

    \item  There exists a $C^1$ function  $\lambda(\alpha)$ defined on an open interval $J$ containing  $\alpha_0$ such that $\lambda(\alpha_0)=\lambda_0$ and $G_1(\alpha,\lambda(\alpha))=0$ for $\alpha\in J$ and $|\lambda(\alpha)-\lambda_0|= O(|\alpha-\alpha_0|)$ as $\alpha$ goes to $\alpha_0$. Define the function $\kappa$ on $J$ by
\begin{equation}\label{kappa(alpha)1}\displaystyle\kappa(\alpha):=\frac{\pi|||u_{\lambda(\alpha)}|||^2}{||\phi||^2}\end{equation} Then, $\kappa(\alpha_0)=0$ and $\kappa(\alpha)>0$ for $\alpha\in J\setminus\{\alpha_0\}.$ If $u_\lambda$ vanishes to order $n$ at $\lambda_0$, then there exists $C>0$ such that $\kappa(\alpha)\leq C|\alpha-\alpha_0|^{2n}$ for $\alpha$ near $\alpha_0$. \item Writing $\lambda_h(\alpha)=\lambda(\alpha)+h\kappa(\alpha)$ as earlier,  for any fixed $h\in \mathbb R,$ we have \begin{equation}\label{vw asym1}\lim_{\alpha\to \alpha_0}\kappa(\alpha)\rho^{v,w}(\alpha,\lambda_h(\alpha))=\frac{1}{\pi}\frac{1}{h^2+1}\langle P_{\lambda_0}v,w\rangle\end{equation}where $P_{\lambda_0}$ denotes the eigenprojection corresponding to eigenvalue $\lambda_0$ of $H_{\alpha_0}.$  \item If $u_\lambda$ vanishes to order $n$ at $\lambda_0$, then for any $p<2n,$ the spectrum of $H_\alpha$ is concentrated to order $p$ at $\lambda_0$ as $\alpha$ goes to $\alpha_0.$ \item  For the eigenvector $\phi$ of $H_{\alpha_0}$, \[\lim_{\alpha\to\alpha_0}\langle\e^{-\iu t\frac{H_\alpha-\lambda(\alpha)}{\kappa(\alpha)}}\phi,\phi\rangle=||\phi||^2\e^{-|t|}\] uniformly in $t.$\item For $\alpha>0$ with $\alpha\neq\alpha_0$, we have $\tau_\alpha(v,w):=\int_{-\infty}^{\infty}|\langle\e^{-\iu tH_\alpha}v,w\rangle|^2\diff t<\infty$. Here $\tau_\alpha(\phi):=\tau_\alpha(\phi,\phi)$ is the sojourn time at the vector $\phi$. Moreover, there exists a $\delta>0$ such that\[\tau_\alpha(\phi)>\frac{||\phi||^4}{4\kappa(\alpha)}\ \ \text{for}\ |\alpha-\alpha_0|<\delta\]
\end{enumerate} \end{theorem}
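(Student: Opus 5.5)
The plan is to transfer every step of Sections \ref{4} and \ref{5} through the unitary isomorphism $U$ of \eqref{iso}. Under $U$, the operator $H_0$ becomes multiplication by $\lambda$ on $L^2([0,\infty),L^2(S^2))$, and matrix elements of $R_0(z)$ become Cauchy integrals of the scalar functions $\langle\langle v,w\rangle\rangle\in\mathcal S(\RR)$. So every earlier scalar computation applies verbatim once $v(\lambda)\overline{w(\lambda)}$ is replaced by $\langle\langle v_\lambda,w_\lambda\rangle\rangle$ and $|u(\lambda)|^2$ by $|||u_\lambda|||^2$.

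For part (a), I first note that the resolvent relation \eqref{ResolventRelation2} is abstract and holds unchanged. The boundary values are then given by the Plemelj--Privalov formula displayed before the definition of $G$. Since $G$ has no zeros for $\alpha\neq\alpha_0$, $\alpha>0$, by the analogue of Theorem \ref{EmbeddedDisappear}, I insert these boundary values into the density formula \eqref{spectraldensity}. The algebra is the same as in Lemma \ref{SpecDe}(a), using $\overline{\gamma(f,\lambda)}=\gamma(\overline f,\lambda)$ and $\overline{\langle\langle v,u\rangle\rangle}=\langle\langle u,v\rangle\rangle$.

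For part (b), I apply the implicit function theorem to the smooth function $G_1$. The one genuinely new ingredient is the identity
\[\partial_\lambda\gamma(\langle\langle u,u\rangle\rangle,\lambda_0)=\int_0^\infty\frac{|||u_\lambda|||^2}{(\lambda-\lambda_0)^2}\diff\lambda=||\phi||^2.\]
This is Proposition \ref{derivative}(b) with $|f|^2$ replaced by the nonnegative Schwartz function $\langle\langle u,u\rangle\rangle$, which vanishes to second order at $\lambda_0$. The same integration by parts works, because $\langle\langle u,u\rangle\rangle(x)/(x-\lambda_0)$ is smooth and rapidly decreasing. The bound on $\kappa$ follows from the vector-valued Taylor estimate $|||u_{\lambda(\alpha)}|||\le \sup|||u^{(n)}|||\,|\lambda(\alpha)-\lambda_0|^n/n!$.

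For part (c), Lemma \ref{lemma1} carries over with $F$ replaced by $G$. To prove \eqref{limit1} for $q=|||u_\cdot|||^2$, I use that its derivative $2\Re\langle\langle u',u\rangle\rangle$ vanishes at $\lambda_0$. In the limit, the surviving term is the one with $\gamma(\langle\langle v,u\rangle\rangle,\lambda_0)\gamma(\langle\langle u,w\rangle\rangle,\lambda_0)$. This term equals $\langle v,\phi\rangle\langle\phi,w\rangle$ because
\[\gamma(\langle\langle v,u\rangle\rangle,\lambda_0)=\int\frac{\langle\langle v_\lambda,u_\lambda\rangle\rangle}{\lambda-\lambda_0}\diff\lambda=\langle v,\phi\rangle.\]
The integrand here is nonsingular since $u_{\lambda_0}=0$. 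All other terms carry a factor $|||u_{\lambda_h(\alpha)}|||\to0$ or $\kappa(\alpha)\to0$, by Cauchy--Schwarz in $L^2(S^2)$.

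Parts (d), (e) and (f) are then obtained exactly as in Corollary \ref{SpecCor}, Corollary \ref{exp}, and Theorems \ref{S1} and \ref{Sojourn}. These proofs use only (c), the normalization $\int\rho^\phi=||\phi||^2$, trace-class convergence of the resolvents (the perturbation is rank one), Proposition \ref{pro}, and Lemma \ref{Halmos}. For (f), I also need $|G(\alpha,\cdot)|\ge c_\alpha>0$, which follows since $G\to1$ at infinity and $G\neq0$.

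I expect the main obstacle to be the derivative identity for $\partial_\lambda G_1$ together with the regularity of $\phi$: I must show $\phi\in\mathcal E$ and that $\langle\langle u,u\rangle\rangle$ has the Schwartz properties required near $\lambda=0$. The definition of $\mathcal E$, with rapid decay of $u_{1/\lambda}$, is what guarantees extension by zero to a smooth function. I would first check this carefully at $\lambda=0$; after that, everything else is routine.
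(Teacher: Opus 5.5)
Your proposal is correct and follows the paper's own proof essentially step by step. You transfer everything through $U$, and then use:
\begin{itemize}
\item the abstract resolvent identity plus Plemelj--Privalov for (a);
\item the implicit function theorem with $\partial_\lambda G_1(\alpha_0,\lambda_0)=\alpha_0\|\phi\|^2$ for (b);
\item the analogue of Lemma \ref{lemma1} for (c);
\item the arguments of Corollaries \ref{SpecCor}, \ref{exp} and Theorems \ref{S1}, \ref{Sojourn} for (d)--(f).
\end{itemize}

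Two smaller remarks. Your observation that Proposition \ref{derivative}(b) only needs $q=\langle\langle u,u\rangle\rangle\ge 0$ Schwartz with $q(\lambda_0)=0$ (hence $q'(\lambda_0)=0$) is a slightly more careful justification than the paper's direct citation. The regularity point you flag ($\langle\langle v,w\rangle\rangle\in\mathcal S(\RR)$ and $\phi\in\mathcal E$) is simply asserted in the paper. Note, however, that it really requires rapid decay of the derivatives $v^{(k)}_\lambda$ at $0$ and $\infty$, not just of $|||v_\lambda|||$, so the definition of $\mathcal E$ as literally stated does not by itself give a smooth extension by zero.
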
\begin{proof} Part~(a) is the analogue of \eqref{DensityForvw} in the previous model and the proof is along the same lines. For part~(b), observe  by Proposition~\ref{derivative}~(b) that  \[\frac{\partial G_1}{\partial\lambda}(\alpha_0,\lambda_0)=\alpha_0\int_\RR\frac{|||u_\lambda|||^2}{(\lambda-\lambda_0)^2}\diff \lambda=\alpha_0||\phi||^2\neq0\] Thus by the implicit function theorem, we have an open interval $J$ around $\alpha_0$ such that there exists a $C^1$ function $\lambda:J\to\RR$ satisfying the desired properties.\\ Since $|||u_\lambda|||$ vanishes only at $\lambda_0$   we have  $\kappa(\alpha_0)=0  $ and $\kappa(\alpha)>0$ for $\alpha\in J\setminus\{\alpha_0\}.$ Also,  we have \[|||u_{\lambda(\alpha)}|||= |||u_{\lambda(\alpha)}-u_{\lambda_0}|||=\left|\left|\left|\int_{\lambda_0}^{\lambda(\alpha)}u'_\lambda\diff\lambda\right|\right|\right|\leq  
\left|\int_{\lambda_0}^{\lambda(\alpha)} |||u'_\lambda ||| \diff\lambda \right|\]
This implies
$|||u_{\lambda(\alpha)}|||^2\le 
|\lambda(\alpha)-\lambda_0|^2~||u'||^2_{L^\infty ((0,\infty),L^2(S^2))}$ which further implies using the properties of $\lambda(\alpha)$ that for some  constant $C'>~0$, $\kappa(\alpha)\leq{C'|\alpha-\alpha_0|^2}$ for $\alpha$ near $\alpha_0.$ When $u_\lambda$ vanishes to order $n$ at $\lambda_0$ we get the desired estimate by  repeating the above argument. This completes the proof of part~(b). By argument as in Lemma \ref{lemma1} (a), for any fixed $h\in\RR$ we have \begin{equation}\label{F1F2 1}\lim_{\alpha\to\alpha_0}\frac{G_1(\alpha,\lambda_h(\alpha))}{\kappa(\alpha)}=h\alpha_0||\phi||^2\text{ and }
        \lim_{\alpha\to\alpha_0}\frac{G_2(\alpha,\lambda_h(\alpha))}{\kappa(\alpha)}=\alpha_0||\phi||^2\end{equation} and (c) follows. The parts (d) and  (e)  follow by similar arguments as in Corollary~\ref{SpecCor} and Corollary~\ref{exp} respectively.

        For any fixed $\alpha>0$ with $\alpha \neq \alpha_0$, the finiteness of $\tau_\alpha(v,w)$ is a consequence of the $L^2$-integrability of $\rho^{v,w}(\alpha,\cdot)$, as in Theorem~\ref{S1}. Proof of part (f) is along the line of argument in Theorem~\ref{Sojourn}.\end{proof}
\subsection{Behaviour of scattering cross-section and time delay}

 Since $H_\alpha-H_0\in\mathcal B_1(L^2(\RR))$, similar to the previous model, the wave operators $\Omega^{(\alpha)}_{\pm}$ for the pair $(H_0,H_\alpha)$ exist and are complete (cf.~section \ref{6} for more comments).  The  scattering operator $S^{(\alpha)}=\Omega_+^{(\alpha)*} \Omega_-^{(\alpha)}$ commutes with $H_0$ and is decomposable with respect to the representation \eqref{iso} of $H_0$, i.e. \[US^{(\alpha)}U^{-1}=\{S_\lambda^{(\alpha)}\}\]For $\lambda\in(0,\infty),$ $S_\lambda^{(\alpha)}$ is a unitary operator on $L^2(S^2)$ and it is called the scattering matrix at ``energy" $\lambda.$  By Theorem 6.7.3 in \cite{Yafaev} (see also Proposition 8.22 in \cite{KBSBook}), we have
 \[ S_\lambda^{(\alpha)}=I-\frac{2\pi\iu\alpha}{G(\alpha,\lambda)}\langle\langle \cdot,u_\lambda\rangle\rangle u_\lambda\] By \eqref{G}, for $\lambda\neq\lambda_0$
\begin{equation}
S_\lambda^{(\alpha)}=I-2\iu\frac{G_2(\alpha,\lambda)}{G(\alpha,\lambda)}\langle\langle\cdot,\frac{u_\lambda}{|||u_\lambda|||}\rangle\rangle\frac{u_\lambda}{|||u_\lambda|||}\end{equation}    
Note that, the determinant \begin{equation}\label{det S}\det  S_\lambda^{(\alpha)}=1-2\iu\frac{G_2(\alpha,\lambda)}{G(\alpha,\lambda)}=\frac{\overline{{G(\alpha,\lambda)}}}{G(\alpha,\lambda)}= \e^{-2\pi\iu\xi_\alpha(\lambda)}\end{equation}
 where $\xi_\alpha(\lambda)=\frac{1}{\pi}\arg G(\alpha,\lambda)$ is the Krein's spectral shift function.
  For $\alpha>0,\alpha\neq\alpha_0,$ define $R^{(\alpha)}_\lambda=S^{(\alpha)}_\lambda-I$, i.e.
  \begin{equation}\label{R expression}R^{(\alpha)}_\lambda=-\frac{2\pi\iu\alpha}{G(\alpha,\lambda)}\langle\langle \cdot,u_\lambda\rangle\rangle u_\lambda\end{equation} 
 In the following theorem, we study the asymptotic behaviour of $R^{(\alpha)}_\lambda$   near $\lambda_0 $  as $\alpha$ goes to  $\alpha_0$.
\begin{theorem}\label{Rthm}Consider the model in Definition \ref{model2} along with additional hypothesis that $u'_{\lambda_0}\neq 0.$ Then for any fixed $h\in\RR,$ \[\lim_{\alpha\to\alpha_0}R^{(\alpha)}_{\lambda_h(\alpha)}=-\frac{2\iu}{h+\iu} \langle\langle\cdot,e({\lambda_0})\rangle\rangle e({\lambda_0})\quad\text{in Hilbert-Schmidt norm}\] where $e(\lambda_0)=u'_{\lambda_0}/|||u'_{\lambda_0}|||.$\end{theorem}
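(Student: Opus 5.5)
The plan is to factor $R^{(\alpha)}_{\lambda_h(\alpha)}$ into a scalar coefficient times a rank-one projection and treat the two factors separately. By \eqref{R expression} and the definition of $G_2$ in \eqref{G}, for every $\lambda$ with $u_\lambda\neq0$ we have
\[
R^{(\alpha)}_\lambda=-2\iu\,\frac{G_2(\alpha,\lambda)}{G(\alpha,\lambda)}\,Q_\lambda,
\qquad Q_\lambda:=\langle\langle\cdot,e_\lambda\rangle\rangle e_\lambda,\quad e_\lambda:=\frac{u_\lambda}{|||u_\lambda|||}.
\]
The proof then reduces to two claims. First, the scalar factor converges to $-2\iu/(h+\iu)$. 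Second, $Q_{\lambda_h(\alpha)}\to\langle\langle\cdot,e(\lambda_0)\rangle\rangle e(\lambda_0)$ in Hilbert--Schmidt norm. Since a product of a convergent scalar with an HS-convergent operator converges in HS norm, these two claims give the theorem.

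For the scalar factor I divide numerator and denominator by $\kappa(\alpha)$:
\[
\frac{G_2}{G}=\frac{G_2/\kappa}{G_1/\kappa+\iu\,G_2/\kappa}.
\]
By \eqref{F1F2 1}, this tends to $\dfrac{\alpha_0||\phi||^2}{h\alpha_0||\phi||^2+\iu\alpha_0||\phi||^2}=\dfrac{1}{h+\iu}$. This is exactly the computation used in Theorem \ref{thm100}.

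Before treating the projection, I must check that $u_{\lambda_h(\alpha)}\neq0$, i.e.\ that $\lambda_h(\alpha)\neq\lambda_0$ for $\alpha\neq\alpha_0$ near $\alpha_0$.
\begin{itemize}
\item From part (b) of the preceding theorem, $\lambda'(\alpha_0)=1/(\alpha_0^2||\phi||^2)\neq0$, so $|\lambda(\alpha)-\lambda_0|\ge c|\alpha-\alpha_0|$ near $\alpha_0$.
\item Also $\kappa(\alpha)\le C|\alpha-\alpha_0|^2$.
\item Hence $\lambda_h(\alpha)-\lambda_0=(\lambda(\alpha)-\lambda_0)(1+o(1))$. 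In particular it is nonzero, and it has the sign of $\lambda'(\alpha_0)(\alpha-\alpha_0)$.
\end{itemize}

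For the projection, since $u_{\lambda_0}=0$ and $u$ is strongly $C^1$, I write
\[
u_\lambda=(\lambda-\lambda_0)\int_0^1u'_{\lambda_0+s(\lambda-\lambda_0)}\,\diff s=(\lambda-\lambda_0)\bigl(u'_{\lambda_0}+r(\lambda)\bigr),
\]
where $|||r(\lambda)|||\to0$ as $\lambda\to\lambda_0$ by strong continuity of $u'$. Using the hypothesis $u'_{\lambda_0}\neq0$, it follows that
\[
e_\lambda=\sgn(\lambda-\lambda_0)\,\frac{u'_{\lambda_0}+r(\lambda)}{|||u'_{\lambda_0}+r(\lambda)|||}=\sgn(\lambda-\lambda_0)\,e(\lambda_0)+o(1).
\]
The sign is harmless because $Q_\lambda$ is invariant under $e_\lambda\mapsto-e_\lambda$. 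I then choose $\sigma=\pm1$ with $|||e_\lambda-\sigma e(\lambda_0)|||\to0$. For unit vectors $a,b$ the elementary bound
\[
\bigl\|\langle\langle\cdot,a\rangle\rangle a-\langle\langle\cdot,b\rangle\rangle b\bigr\|_2\le2|||a-b|||
\]
holds, since each rank-one piece $\langle\langle\cdot,x\rangle\rangle y$ has HS norm $|||x|||\,|||y|||$. Applying it with $a=e_{\lambda_h(\alpha)}$ and $b=\sigma e(\lambda_0)$ gives the HS convergence of the projections.

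The main obstacle is this projection step: it needs $\lambda_h(\alpha)\neq\lambda_0$ so that $e_{\lambda_h(\alpha)}$ is defined, and it needs control of the possible sign flip of $u_\lambda/|||u_\lambda|||$ across $\lambda_0$. Both are handled above, by the lower bound on $|\lambda(\alpha)-\lambda_0|$ and by sign invariance of the projection.
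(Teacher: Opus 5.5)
Your proof is correct and follows the paper's own route. It uses the same factorization $R^{(\alpha)}_{\lambda_h(\alpha)}=-2\iu\,(G_2/G)\,Q_{\lambda_h(\alpha)}$, the scalar limit $1/(h+\iu)$ from \eqref{F1F2 1}, and the convergence of $u_{\lambda_h(\alpha)}/|||u_{\lambda_h(\alpha)}|||$ to $e(\lambda_0)$ up to the sign $\sgn(\lambda_h(\alpha)-\lambda_0)$. It also makes two small improvements: you need only continuity of $u'$ rather than the bound on $u''$ used in \eqref{estimate}, and you explicitly verify $\lambda_h(\alpha)\neq\lambda_0$, which the paper uses without checking.
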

\begin{proof}  Substituting $\lambda=\lambda_h(\alpha) $ in \eqref{R expression}, we get
%
\begin{equation}\label{I_1I_2}
R^{(\alpha)}_{\lambda_h(\alpha)} =-2\iu\cdot I_1(\alpha,h)\cdot I_2(\alpha,h)
\end{equation}
where \[I_1(\alpha,h)= \langle  \langle \cdot,\frac{{u}_{\lambda_h(\alpha)}}{|||{u}_{\lambda_h(\alpha)}|||} \rangle \rangle  \frac{{u}_{\lambda_h(\alpha)}}{|||{u}_{\lambda_h(\alpha)}|||}\quad\text{and}\quad I_2(\alpha,h)=\frac{\alpha\pi|||u_{\lambda_h(\alpha)}|||^2}{G(\alpha,\lambda_h(\alpha))}\]
Noting   by definition  \eqref{G} of $G_2 $ that \begin{align*}I_2(\alpha,h)&=\frac{G_2(\alpha,\lambda_h(\alpha))}{G(\alpha,\lambda_h(\alpha))}=\frac{\kappa(\alpha)}{G(\alpha,\lambda_h(\alpha)) }\frac{G_2(\alpha,\lambda_h(\alpha))}{\kappa(\alpha)}\end{align*}  \eqref{F1F2 1} gives \begin{equation}\label{I2}\lim_{\alpha\to\alpha_0}I_2(\alpha,h)=\frac{1}{h+\iu}\end{equation}
We can rewrite \[I_1(\alpha,h)= \langle  \langle \cdot,\frac{\tilde{u}_{\lambda_h(\alpha)}}{|||{u}_{\lambda_h(\alpha)}|||} \rangle \rangle  \frac{\tilde{u}_{\lambda_h(\alpha)}}{|||{u}_{\lambda_h(\alpha)}|||}\] where $\tilde{u}_{\lambda_h(\alpha)}=\sgn(\lambda_h(\alpha)-\lambda_0)u_{\lambda_h(\alpha)}.$
We have
\begin{equation}\label{estimate}\begin{split}
     |||u_{\lambda_h(\alpha)}-u'_{\lambda_0}(\lambda_h(\alpha)-\lambda_0)|||&=\left|\left|\left|\int^{\lambda_h(\alpha)}_{\lambda_0}(u_\lambda'-u'_{\lambda_0})\diff\lambda\right|\right|\right|\\&=\left|\left|\left|\int^{\lambda_h(\alpha)}_{\lambda_0}\int^{\lambda}_{\lambda_0}u''_s\diff s\diff\lambda\right|\right|\right|\leq c|\lambda_h(\alpha)-\lambda_0|^2\end{split}
 \end{equation} where $c=\|u''\|_{L^{\infty}((0,\infty), L^2(S^2))}.$ By triangle inequality,\begin{equation}\label{triangleinequality}|||u'_{\lambda_0}|||\cdot|\lambda_h(\alpha)-\lambda_0|-
  |||u_{\lambda_h(\alpha)}|||
 \leq c|\lambda_h(\alpha)-\lambda_0|^2
\end{equation}
 For $\alpha\in J\setminus\{\alpha_0\}$, consider\begin{align*}&\frac{\tilde{u}_{\lambda_h(\alpha)}}{|||{u}_{\lambda_h(\alpha)}|||}-e(\lambda_0)=\frac{\tilde{u}_{\lambda_h(\alpha)}}{|||{u}_{\lambda_h(\alpha)}|||}-\frac{u'_{\lambda_0}[\sgn(\lambda_h(\alpha)-\lambda_0)]}{|||u'_{\lambda_0}|||}\frac{\lambda_h(\alpha)-\lambda_0}{|\lambda_h(\alpha)-\lambda_0|}\\&\quad\quad=\frac{\tilde{u}_{\lambda_h(\alpha)}}{|||{u}_{\lambda_h(\alpha)}|||}-\frac{\tilde{u}_{\lambda_h(\alpha)}}{|||u'_{\lambda_0}|||\cdot|\lambda_h(\alpha)-\lambda_0|}+\frac{\tilde{u}_{\lambda_h(\alpha)}-u'_{\lambda_0}[\sgn(\lambda_h(\alpha)-\lambda_0)](\lambda_h(\alpha)-\lambda_0)}{|||u'_{\lambda_0}|||\cdot|\lambda_h(\alpha)-\lambda_0|}\end{align*} By triangle inequality we get
\begin{align*}\left|\left|\left|\frac{\tilde{u}_{\lambda_h(\alpha)}}{|||{u}_{\lambda_h(\alpha)}|||}-e(\lambda_0)\right|\right|\right|&\leq \dfrac{|||u_{\lambda_h(\alpha)}-u'_{\lambda_0}(\lambda_h(\alpha)-\lambda_0)|||}{|||u'_{\lambda_0}|||\cdot|\lambda_h(\alpha)-\lambda_0|}\\&\hspace{4mm}+|||u_{\lambda_h(\alpha)}|||\left|\frac{1}{|||u_{\lambda_h(\alpha)}|||}-\frac{1}{|||u'_{\lambda_0}|||\cdot|\lambda_h(\alpha)-\lambda_0|}\right|\end{align*}  
 As $\alpha\to\alpha_0$, first term goes to 0 by \eqref{estimate} and second term goes to 0 by \eqref{triangleinequality}. Hence, \[\slim_{\alpha\to\alpha_0}\frac{\tilde{u}_{\lambda_h(\alpha)}}{|||{u}_{\lambda_h(\alpha)}|||}= e(\lambda_0)\] and $I_1(\alpha,h)$ converges to $\langle\langle\cdot, e({\lambda_0})\rangle\rangle e({\lambda_0})$ in Hilbert-Schmidt norm as $\alpha\to\alpha_0$ and by \eqref{I2} the result follows.
\end{proof}
\begin{remark}
When  $u_\lambda$ is strongly analytic and vanishing up to order $n$ at $\lambda_0$, then for any fixed $h\in\RR,$ \[ \lim_{\alpha\to\alpha_0}R^{(\alpha)}_{\lambda_h(\alpha)}=-\frac{2\iu}{h+\iu}\langle\langle\cdot, \frac{u^{(n)}_{\lambda_0}}{|||u^{(n)}_{\lambda_0}|||}\rangle\rangle\frac{u^{(n)}_{\lambda_0}}{|||u^{(n)}_{\lambda_0}|||}\quad\text{in Hilbert-Schmidt norm}.\]
\end{remark}
Now we analyze the scattering cross-section and the scattering amplitude for the scattering system $(H_0,H_\alpha)$.
The total scattering cross-section $\overline{\sigma}_\alpha(\lambda)$ at ``energy" $\lambda$ for the scattering system $(H_0,H_\alpha)$ satisfies  (see equation (7.69) in \cite{KBSBook})\[\overline{\sigma}_\alpha(\lambda):=\pi\lambda^{-1}||R^{(\alpha)}_\lambda||^2_{2}\] \noindent Note that, the kernel of $R^{(\alpha)}_\lambda$ is given by \[R^{(\alpha)}_\lambda(\omega_0,\omega)=-2\pi\iu\alpha\frac{u_\lambda(\omega_0)\overline{u_\lambda(\omega)}}{G(\alpha,\lambda)}\quad\text{for a.e. }(\omega_0,\omega)\in S^2\times S^2\]
  We now define the scattering amplitude $f(\lambda;\omega_0\to\omega)$ by (see equation (7.48) in \cite{KBSBook})\[f_\alpha(\lambda;\omega_0\to\omega):=-2\pi\iu\lambda^{-1/2}R_\lambda^{(\alpha)}(\omega,\omega_0)=-\frac{4\pi^2\alpha}{\sqrt{\lambda}}\frac{u_\lambda(\omega)\overline{u_\lambda(\omega_0)}}{G(\alpha,\lambda)}\] for almost all $\lambda\in(0,\infty)$ and $(\omega_0,\omega)\in S^2\times S^2.$\\ The following results on the behaviour of the total scattering cross-section and the scattering amplitude are immediate consequences of Theorem \ref{Rthm}.
\begin{corollary}For any fixed $h\in \mathbb R,$  \begin{enumerate}[label=(\alph*)]
    \item 
$\lim\limits_{\alpha\to\alpha_0}\overline{\sigma}_\alpha(\lambda_h(\alpha))=\dfrac{4\pi}{\lambda_0}\dfrac{1}{h^2+1}$
\item   there exists a sequence $\{\alpha_k\}$ converging to $\alpha_0$ such that \[\lim_{k\to\infty}f_{\alpha_k}(\lambda_h(\alpha_k);\omega_0\to\omega)=-\frac{4\pi}{\sqrt{\lambda_0}}\dfrac{{u}'_{\lambda_0}(\omega)\overline{{u}'_{\lambda_0}(\omega_0)}}{|||u'_{\lambda_0}|||^2}\frac{1}{h+\iu}\quad\text{for a.e. }(\omega_0,\omega)\in S^2\times S^2\]
\end{enumerate} 
\end{corollary}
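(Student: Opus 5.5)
Both parts will be read off from Theorem \ref{Rthm}, which gives the Hilbert--Schmidt convergence
\[R^{(\alpha)}_{\lambda_h(\alpha)}\to R_h:=-\frac{2\iu}{h+\iu}\langle\langle\cdot,e(\lambda_0)\rangle\rangle e(\lambda_0).\]
I will also use the continuity of $\lambda_h(\alpha)$, which follows from Lemma (b) of the density theorem: $\lambda_h(\alpha)=\lambda(\alpha)+h\kappa(\alpha)\to\lambda_0>0$ as $\alpha\to\alpha_0$.

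\emph{Part (a).} Hilbert--Schmidt convergence implies convergence of the Hilbert--Schmidt norms. Hence $\|R^{(\alpha)}_{\lambda_h(\alpha)}\|_2^2\to\|R_h\|_2^2$. Since $e(\lambda_0)$ is a unit vector, $R_h$ is a rank-one operator, and
\[\|R_h\|_2^2=\left|\frac{2\iu}{h+\iu}\right|^2=\frac{4}{h^2+1}.\]
Because $\lambda_h(\alpha)\to\lambda_0>0$, we also have $\lambda_h(\alpha)^{-1}\to\lambda_0^{-1}$. Multiplying these two limits in the definition $\overline\sigma_\alpha(\lambda)=\pi\lambda^{-1}\|R^{(\alpha)}_\lambda\|_2^2$ gives $\frac{4\pi}{\lambda_0}\frac{1}{h^2+1}$.

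\emph{Part (b).} The scattering amplitude is, by definition, $-2\pi\iu\lambda^{-1/2}$ times the kernel of $R^{(\alpha)}_\lambda$ with the arguments swapped. The Hilbert--Schmidt norm of an integral operator on $L^2(S^2)$ equals the $L^2(S^2\times S^2)$ norm of its kernel. So the kernels $R^{(\alpha)}_{\lambda_h(\alpha)}(\omega,\omega_0)$ converge in $L^2(S^2\times S^2)$ to the kernel of $R_h$, namely
\[-\frac{2\iu}{h+\iu}\,e(\lambda_0)(\omega)\overline{e(\lambda_0)(\omega_0)}.\]
An $L^2$-convergent family has a sequence $\alpha_k\to\alpha_0$ along which convergence holds almost everywhere on $S^2\times S^2$. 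This is the only non-routine step, and it is why the statement asserts convergence only along a sequence. Concretely, I take any sequence $\alpha_k\to\alpha_0$ and apply the Riesz--Fischer subsequence theorem to the corresponding kernels.

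It remains to multiply by $-2\pi\iu\lambda_h(\alpha_k)^{-1/2}\to-2\pi\iu\lambda_0^{-1/2}$. The prefactor becomes $(-2\pi\iu)(-2\iu)=-4\pi$. Substituting $e(\lambda_0)=u'_{\lambda_0}/|||u'_{\lambda_0}|||$ gives the stated limit.
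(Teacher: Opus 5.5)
Your proposal is correct and follows the same route as the paper, which treats both parts as immediate consequences of Theorem~\ref{Rthm}. Part (a) comes from convergence of Hilbert--Schmidt norms together with $\lambda_h(\alpha)\to\lambda_0>0$, and part (b) from $L^2(S^2\times S^2)$-convergence of the kernels, an a.e.-convergent subsequence, and the prefactor $(-2\pi i)(-2i)=-4\pi$, which you compute correctly. As a minor aside, (a) does not need the extra hypothesis $u'_{\lambda_0}\neq 0$. It follows from $\|R^{(\alpha)}_\lambda\|_2^2=4G_2(\alpha,\lambda)^2/|G(\alpha,\lambda)|^2$ and \eqref{F1F2 1} alone.
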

  Further, if $u\in\mathcal S(\RR^3)$ then $\hat u\in\mathcal S(\RR^3)$ and by \eqref{iso}, the amplitude $f_\alpha(\lambda;\omega_0\to\omega)$ is continuous in $(0,\infty)\times S^2\times S^2.$ We make the following  remark on the convergence of the  amplitude.
 \begin{remark}Suppose $u\in\mathcal S(\RR^3)$ and  $u'_{\lambda_0}\neq 0$ in $L^2(S^2)$. Then, for any fixed $h\in\RR$ \[\lim_{\alpha\to\alpha_0}f_\alpha(\lambda_h(\alpha);\omega_0\to\omega)=-\frac{4\pi}{\sqrt{\lambda_0}}\dfrac{{u}'_{\lambda_0}(\omega)\overline{{u}'_{\lambda_0}(\omega_0)}}{|||u'_{\lambda_0}|||^2}\frac{1}{h+\iu}\quad\text{uniformly for }\omega_0,\omega\in S^2\] 
When  $u_\lambda$ vanishes up to order $n$ at $\lambda_0$, then a similar limiting result holds with $u'_{\lambda_0}$ replaced by ${u}^{(n)}_{\lambda_0}.$
\end{remark}
Next, we study the average time delay for the scattering system $(H_0,H_\alpha)$. For $\alpha\neq\alpha_0$, the average time delay $\zeta_\alpha (\lambda)$  at ``energy" $\lambda$ is shown to be (see \cite{KBS4}) \[\zeta_\alpha(\lambda)=\frac{\diff}{\diff\lambda}\arg\left(\det S^{(\alpha)}_\lambda\right)\]  Hence by \eqref{det S}, we have the following relation:   \begin{equation}\label{TD1}\zeta_\alpha(\lambda)=-2\pi\xi'_\alpha(\lambda)\end{equation}

 In the following theorem, we obtain the behaviour of average time delay $\zeta_\alpha$  near $\lambda_0$ as $\alpha$ goes to $\alpha_0.$  \bthm\label{31} For each fixed $h\in\RR,$ \[ \lim_{\alpha\to\alpha_0}\kappa(\alpha)\zeta_\alpha(\lambda_h(\alpha))=\frac{2}{h^2+1} \]  \ethm\begin{proof} By \eqref{TD1}, note  that  \[\zeta_\alpha(\lambda)=-2\pi\xi'_\alpha(\lambda)=-2\dfrac{G_1(\alpha,\lambda)\frac{\partial G_2}{\partial\lambda}(\alpha,\lambda)-G_2(\alpha,\lambda)\frac{\partial G_1}{\partial\lambda}(\alpha,\lambda)}{|G(\alpha,\lambda)|^2}\] Now proceeding as in proof of Theorem~\ref{ssf thm} and using \eqref{F1F2 1} together with $\dfrac{\partial G_1}{\partial\lambda}(\alpha_0,\lambda_0)=\alpha_0||\phi||^2,~\dfrac{\partial G_2}{\partial\lambda}(\alpha_0,\lambda_0)=0$, the result follows.\end{proof} 
\section{Appendix}
\blem\label{Halmos}  Let $(X,\Sigma,\mu)$ be a measure space and $f_n,f:X\to\RR$ be $\Sigma$-measurable functions satisfying the following conditions: \begin{enumerate}[label=(\alph*)]
\item $f_n,f\geq0$
    \item $\int_Xf_n\diff\mu\leq\int_Xf\diff\mu<\infty$
    \item $f_n\to f$ pointwise almost everywhere.
\end{enumerate} Then \[\lim_{n\to\infty}\int_X|f_n-f|\diff\mu=0\] \elem
\begin{proof}
Setting $g_n = f_n - f$ and  its positive and negative parts $g_n^+$ and $g_n^-$ respectively, $g_n^-=\max\{0,-g_n\}\leq f.$ So by the dominated convergence theorem  \[\lim_{n \to \infty}  \int_X  g_n^{{-}}\diff\mu = 0\]
By the condition (b), we have
\[0\geq\int_X  g_n\diff\mu = \int_X  g_n^+\diff\mu - \int_X  g_n^{-}\diff\mu\] which implies
\[\int_X  g_n^+\diff\mu\leq \int_X  g_n^-\diff\mu\ \text{and}\ \lim_{n \to \infty}  \int_X  g_n^{{+}}\diff\mu = 0\]Thus, \[\lim_{n \to \infty}  \int_X  |g_n|\diff\mu=\lim_{n \to \infty}  \int_X g_n^+\diff\mu+\lim_{n \to \infty}  \int_X  g_n^-\diff\mu=0\]
This completes the proof.
\end{proof}
 \section*{Acknowledgements}
\noindent The first author acknowledges the support received from NBHM (under DAE, Govt. of
India) Ph.D. fellowship grant 0203/7/2019/RD-II/14855.
\bibliographystyle{alpha}

\end{document}